\documentclass[12pt]{article}
\usepackage[english]{babel}
\usepackage[T1]{fontenc}
\usepackage[utf8]{inputenc}
\usepackage[a4paper]{geometry}
\usepackage{bm}
\usepackage{dsfont}
\usepackage{amssymb}
\usepackage{amsmath}
\usepackage{amsthm}
\usepackage{amsfonts}
\usepackage{mathrsfs}
\usepackage{mathtools,marvosym}

\usepackage[table]{xcolor}
\usepackage{enumitem}

\usepackage[flushmargin]{footmisc}

\RequirePackage[colorlinks=true,linkcolor=black,pdfpagelabels,citecolor=orange,urlcolor=teal]{hyperref}
\usepackage{url}

\newtheorem{theorem}{Theorem}
\newtheorem{lemma}[theorem]{Lemma}
\newtheorem{prop}[theorem]{Proposition}
\newtheorem{corollary}[theorem]{Corollary}
\theoremstyle{definition}

\newtheorem{remark}[]{Remark}

\newcommand{\N}{\mathbb{N}} 
\newcommand{\Z}{\mathbb{Z}} 
\newcommand{\R}{\mathbb{R}} 
\newcommand{\dd}{\mathop{}\!\mathrm{d}}
\renewcommand{\phi}{\varphi}
\renewcommand{\epsilon}{\varepsilon}
\renewcommand{\theta}{\vartheta}

\newcommand{\abs}[1]{\left\lvert#1\right\rvert} 
\newcommand{\norm}[1]{\left\lVert #1\right\rVert} 
\newcommand{\pscl}[2]{\left< #1,#2\right>}
\newcommand{\Id}{\mathbb{I}} 
\DeclareMathOperator{\rot}{rot} 
\DeclareMathOperator{\supp}{supp}  
\DeclareMathOperator{\cl}{cl}  
\DeclareMathOperator{\GRot}{\overline{\rot}}
\DeclareMathOperator{\interior}{int}
\newcommand{\Null}{\mathcal{N}}
\newcommand{\DD}{\mathcal{D}} 
\DeclareMathOperator{\halfPlane}{\mathcal{H}}
\DeclareMathOperator{\dom}{dom} 
\newcommand{\rr}{\mathfrak{r}}

\numberwithin{equation}{section}
\numberwithin{theorem}{section}
\numberwithin{remark}{section}

\title{Existence of a periodic solution for superquadratic Hamiltonian systems with possible finite-time blow-up}
\author{Alberto Cagnetta and Paolo Gidoni }

\date{}

\begin{document}

\maketitle
\begin{abstract}
	We prove a sufficient condition for the existence of a $T$-periodic solution for the planar system $\dot z=F(t,z)$, characterized by the growth to infinity of the rotations made in one period by solutions starting at increasingly large initial values. 	
	Our result applies in particular to superquadratic Hamiltonian systems satisfying the Ambrosetti--Rabinowitz condition. The key novelty of the paper is that we do not require any growth condition on the flow to ensure global existence of solutions, allowing finite-time blow-up. Our method is based on a fixed-point theorem which exploits the rotational properties of the dynamics. To conclude, we discuss a family of examples of Hamiltonian systems showing finite-time blow-up.
\end{abstract}

\noindent\textbf{MSC (2020).} Primary 34C25; Secondary 37C25, 37E45.

\noindent\textbf{Keywords.} Periodic solutions; superquadratic Hamiltonian systems; blow-up solution; Ambrosetti--Rabinowitz condition; rotation; fixed-point theorem. 

\footnotetext{Dipartimento Politecnico di Ingegneria e Architettura, Università degli Studi di Udine, Via delle Scienze 206, 33100 Udine, Italy. \Letter \; \texttt{alberto.cagnetta@uniud.it}, \texttt{paolo.gidoni@uniud.it}}

\section{Introduction and motivation}	

In this paper we discuss the existence of a $T$-periodic solution for a family of planar systems
\begin{equation}\label{eq:planar}
	\dot z=F(t,z)
\end{equation}
where $F$ is  $T$-periodic in $t$.

The characterizing feature imposed on \eqref{eq:planar} will be a rotational property of its solutions as $\abs{z}\to+\infty$, which can be roughly summarized by stating that solutions of \eqref{eq:planar} with sufficiently large $\abs{z(0)}$ make an arbitrarily large number of rotations around the origin during the time interval $[0,T]$. Such a property is pivotal, for instance, in the study of scalar superlinear second order ODEs and of planar superquadratic Hamiltonian systems. 

The investigation of this topic began with the study of the existence of a periodic solution for the second order superlinear ODE
\begin{equation}\label{eq:superlinearODE}
	\ddot x+f(t,x)=0
\end{equation}
where $f$ is $T$-periodic in $t$ and
\begin{equation}\label{eq:superlinearODEgrowth}
	\lim_{\abs{x}\to+\infty}\frac{f(t,x)}{x}=+\infty \qquad \text{uniformly in $t\in[0,T]$.}
\end{equation}
The problem was first studied by Morris \cite{Mor1,Mor2} for $f(t,x)=x^3+p(t)$ and by Ehrmann \cite{Ehr} for $f(t,x)=g(x)+p(t)$ under symmetry conditions. The general case $f(t,x)=g(x)+p(t)$ was solved by Fučík and Lovicar \cite{Fucik} through a fixed-point argument, which was later improved by Struwe \cite{Stru} to the case $\abs{f(t,x,\dot x)-g(x)}\leq K(\abs{x}+\abs{\dot x})+p(t)$, where $g$ grows superlinearly. We refer to \cite{CaMaZa} for a more detailed history of the problem, whereas we will soon discuss the recent contribution in \cite{Gid23}.

To exploit the rotational structure of the dynamics, equation~\eqref{eq:superlinearODE} is often studied on the phase plane $z=(z_1,z_2)=(x,\dot x)$, on which it assumes the  Hamiltonian structure
\begin{align}\label{eq:HS}
	\dot z(t) =J\nabla_{z}H(t,z)\,, && J=\begin{pmatrix}
	0&1\\-1&0
	\end{pmatrix} \,.
\end{align}
For \eqref{eq:superlinearODE}, more precisely, we have $H(t,z)=\frac{1}{2}z_2^2+\int_{0}^{z_1}f(t,x)\dd x$, meaning that the Hamiltonian function $H$ grows quadratically in $z_2$ and superquadratically in $z_1$. 
This structure suggested extending the investigation to include planar Hamiltonian systems where the Hamiltonian function $H(t,z)$ is $T$-periodic in $t$ and grows superquadratically in $\abs{z}$. This is usually achieved by imposing an \emph{Ambrosetti--Rabinowitz}-type condition: \emph{there exist $k\in(0,1/2)$, $m>0$ and $R>0$ such that}
\begin{equation}
	\label{eq:Ambrosetti-Rabinowitz}
	0<m\leq H(t,z)<k\pscl{z}{\nabla_{z}H(t,z)}\qquad\text{for every $\abs{z}>R$,}
\end{equation}
who indeed implies $H(t,z)>a\abs{z}^{\frac{1}{k}}$ for every $\abs{z}>R$, cf. Lemma~\ref{lem:growth_H_under_Ambrosetti-Rabinowitz}. The interest in the problem was first raised by Rabinowitz \cite{Rab78}, focusing first on the autonomous case and then briefly discussing the existence of a periodic solution in the nonautonomous case. A key contribution is due to Bahri and Berestycki \cite{BaBe}, which, with variational methods, found the existence of infinitely many $T$-periodic solutions for a $\mathcal{C}^2$-smooth Hamiltonian $H(t,z)=H_0(z)+p(t)\cdot z$, with $H_0$ satisfying \eqref{eq:Ambrosetti-Rabinowitz} and a growth-condition $a \abs{z}^{b+1}-c\leq H_0(z)\leq a'\abs{z}^{b'+1}+c'$, with $1<b\leq b'<2b+1$. Improvements of this result, under weaker conditions, can be found in \cite{Long90,PisTuc} employing variational methods, and in \cite{Bos12} using the Poincaré--Birkhoff Theorem.

While the Hamiltonian structure \eqref{eq:HS} allows also a variational approach, the more general formulation \eqref{eq:planar} as planar system can be studied with topological techniques based on the notion of rotation of a solution.  Given a solution $z(t)=\zeta(t;0,z_0)$ of the initial value problem  \begin{equation}\label{eq:cauchyIVP}
	\begin{cases}
		\dot{z}(t)=F(t,z)\,,\\
		z(0)=z_0\,,
	\end{cases}
\end{equation} existing on an interval $t\in[0,\bar t]$ and such that $z(t)\neq 0$ for every $t\in[0,\bar t]$, we define the clockwise \emph{rotation} of the solution during $[0,\bar t]$ as
\begin{equation*}
	\rot_{\bar t}(z_{0})= \frac{1}{2\pi}\int_{0}^{\bar t}\frac{\pscl{F(s,z(s))}{Jz(s)}}{\abs{z(s)}^{2}}\,\dd s \,.
\end{equation*}
As we discuss in Section~\ref{sec:prel}, rotation is, more properly, related to the lift in polar coordinates of the orbit of the solution, which explains why we do not define it for orbits crossing the origin $0_{\R^2}$.

In addition to some mild regularity assumptions providing local existence and uniqueness of solution, all the results in literature discussed above have in common the presence of two key assumptions:
\begin{enumerate}[label=\textup{\roman*)}]
\item \emph{global existence of solutions for positive times}: 
For the ODE \eqref{eq:superlinearODE} it is automatically satisfied via energy estimates if $f(t,x)=g(x)+p(t)$, while for example in \cite{Stru} it is obtained by the previously stated growth-condition on $f$. For the Hamiltonian system \eqref{eq:HS} in \cite{BaBe}, it is given by the upper growth-estimate on $H_0$.

\label{cond:global}
\item \emph{a rotational condition}: for example the superlinearity \eqref{eq:superlinearODEgrowth} for \eqref{eq:superlinearODE} or the Ambrosetti--Rabinowitz condition \eqref{eq:Ambrosetti-Rabinowitz} for \eqref{eq:HS}. \label{cond:rotational}
\end{enumerate}
A nice discussion and weaker versions of both these conditions for \eqref{eq:HS} can be found in \cite{Bos12}.

In the first part of the work, instead, we will adopt a more abstract approach, removing the Hamiltonian structure and thus extending the results to general planar systems \eqref{eq:planar}. In this more topological perspective, the rotational condition \ref{cond:rotational} assures two useful behaviours of the system.
\begin{itemize}	
	\item \emph{Growth at infinity of the rotation number}: namely, for every $n\in\N$ there exists a radius $R_n>0$ such that if $\abs{z_0}\geq R_n$ then $\rot_T(z_0)>n$; this is the property most commonly identified with \ref{cond:rotational}. We will generalize this with assumption \ref{hyp:A5}.
	\item \emph{A limitation to the local \lq\lq unwinding\rq\rq\ of solutions}, namely, the function $t\to \rot_t(z_0)$ can be locally decreasing, but there should be some form of lower bound on $\rot_t(z_0)-\rot_s(z_0)$ for $s<t$. For the superlinear equation \eqref{eq:superlinearODE}, we know that the $\dot x$-axis can be crossed only in the positive sense of rotation, which implies $\rot_t(z_0)-\rot_s(z_0)>-\frac{1}{2}$ for every $s<t$ (cf.~\cite[Prop.~7]{Gid23}). The Ambrosetti--Rabinowitz condition instead implies that $t\mapsto \rot_t(z_0)$ is increasing whenever $\abs{z(t)}>R$ (cf.~Lemma~\ref{lem:H2impliesA6}). We will require only the much weaker condition \ref{hyp:A6}, which is a (possibly negative) uniform lower bound on the angular velocity as $\abs{z}\to+\infty$.
 \end{itemize}

Our main focus will  concern the effects of the global existence of solutions \ref{cond:global}, or the lack thereof. First of all, let us notice that it is not assured by the classical rotational conditions. For the superlinear ODE \eqref{eq:superlinearODE}--\eqref{eq:superlinearODEgrowth}, an example with a solution blowing up in finite time has been provided in \cite{CofUll}. For the superquadratic Hamiltonian system \eqref{eq:HS}--\eqref{eq:Ambrosetti-Rabinowitz},  blow-up solutions are much easier to build: we will discuss a family of examples in Section~\ref{sec:example}.

The advantage of having \ref{cond:global} is clear: it is equivalent to assuming that the solution of the initial value problem \eqref{eq:cauchyIVP} is continuable on $[0,T]$ for every $z_0\in \R^2$, meaning that the Poincaré map $z_0\mapsto\phi(T,z_0)$ associated with the period $T$ is well-defined on the whole plane. This facilitates the application of a suitable fixed-point theorem to recover the existence of periodic solutions, as done for instance in \cite{Fucik,Stru}.

Our first motivating question is thus the following: \emph{if global forward existence of solutions  does not hold, does the problem still admit a periodic solution? Does dropping \ref{cond:global} have qualitative consequences on the rotational properties of the solutions?}

This issue was first addressed by one of the authors for the case of the superlinear ODE \eqref{eq:superlinearODE} in \cite{Gid23}, allowing also a non-Hamiltonian perturbation and introducing a novel rotation-based fixed-point approach. The technique was then employed in \cite{WCQ24}; we also mention \cite{FelFonSfe24} for an alternative recent approach to  a special case of \eqref{eq:superlinearODE}.

A key point in \cite{Gid23} was the assumption of the continuability on $[0,T]$ of the solutions crossing the origin of the phase plane during that time interval, which we will assume in \ref{hyp:A3}. 
Under mild regularity conditions, corresponding to our \ref{hyp:A1}--\ref{hyp:A2}, in \cite{Gid23} it was shown that \ref{hyp:A3} is sufficient and, up to a suitable change of coordinates, also necessary for the existence of a $T$-periodic solution for \eqref{eq:superlinearODE}--\eqref{eq:superlinearODEgrowth}.

The aim of this paper is to extend this result to general planar systems \eqref{eq:planar}, assuming  \ref{hyp:A3}, with \ref{hyp:A5}--\ref{hyp:A6} standing as an \emph{umbrella} rotational condition \ref{cond:rotational}. We therefore ask ourselves: \emph{under this framework, does the special structure of \eqref{eq:superlinearODE} have any nontrivial implication for the rotational properties of the system, that are lost in \eqref{eq:planar}?} 

We have already noticed a small difference when we discussed the ``unwinding'' of solutions in \ref{cond:rotational}. However, the most relevant change concerns the rotational properties of blow-up solutions. In  \eqref{eq:superlinearODE}--\eqref{eq:superlinearODEgrowth}, if a solution blows up at a finite time $\bar t$ without crossing the origin, then $\rot_{t}(x_0,v_0)\to +\infty$ as $t\to\bar t$, namely the solution must do infinite rotations in order to blow up, cf.~\cite[Prop.~7]{Gid23}. As we show in Section~\ref{sec:example}, this is false already for Hamiltonian systems \eqref{eq:HS} satisfying the Ambrosetti--Rabinowitz condition \eqref{eq:Ambrosetti-Rabinowitz}. 

In this work we restrict our analysis to the case where blow-up solutions make infinite rotations, which we will assume with \ref{hyp:A4}. Under such conditions, we prove the existence of a $T$-periodic solution for \eqref{eq:planar}. We will also provide a sufficient condition for \ref{hyp:A4} in the case of Hamiltonian systems, which we then apply to our example in Section \ref{sec:example}. 
The case of systems including blow-up solutions with a finite number of rotations still remains an open problem. Intuitively, we consider this case more problematic, since, in a way, a condition such as \eqref{eq:Ambrosetti-Rabinowitz} is no longer perfectly playing the role of rotational condition \ref{cond:rotational}, because blow-up solutions do not spend enough time at large radii to rotate as desired.

We finally observe that, for Hamiltonian systems \eqref{eq:HS}, the existence of a first $T$-periodic solution is the gateway to proving the existence of infinitely many ones, rotating around the first one, via the Poincaré--Birkhoff Theorem. For the ODE \eqref{eq:superlinearODE} this was first noted by Hartman \cite{Hart} and Jacobowitz \cite{Jacob}, we refer instead to \cite{Bos12,FonSabZan} for applications to planar Hamiltonian systems. Notice that for \eqref{eq:superlinearODE} it has been shown in \cite{FonSfe} that this procedure can be applied also without requiring global existence; we do not exclude that the same procedure could be adapted also to \eqref{eq:HS}, although it is not trivial and beyond the scope of the present work. We also remark once more that we do not require the Hamiltonian structure when stating our main result for \eqref{eq:planar}, although it is the most relevant application.

The paper is structured as follows. In Section~\ref{sec:prel} we recall some preliminary notions on rotation and fixed-point theorems. Our assumptions \ref{hyp:A1}--\ref{hyp:A6} and the main result for \eqref{eq:planar} are presented in Section~\ref{sec:main} and then proved in Section~\ref{sec:proof}. Then, in Section~\ref{section:ham-case}, we focus on the special case of Hamiltonian systems \eqref{eq:HS}, providing some sufficient conditions on the Hamiltonian $H$ in order for the abstract assumption to hold. Finally, in Section~\ref{sec:example} we discuss an illustrative example of a Hamiltonian system with blow-up solutions.

\section{Notation and preliminary results} \label{sec:prel}

 Given a set $U$, $\cl U$ denotes its closure and $\partial U$ its boundary. Open intervals are denoted by $(a,b)$, closed ones by $[a,b]$, with the mixed case $(a,b]$ defined accordingly.

 $\Id_{\R^2}$ is the identity map on $\R^2$ and $0_{\R^2}$ denotes the origin $(0,0)$; whenever there is no ambiguity, the subscript will be omitted. The open ball centered at $z\in\R^{2}$ with radius $R$ is denoted by $B(z,R)\subset \R^{2}$, while the closed ball by $B[z,R]=\cl B(z,R)$. $\abs{v}$ is the Euclidean norm of a vector $v\in \R^n$. 
The overdot $\dot{\ }$ denotes the time derivative $\frac{\dd}{\dd t}$.

\subsection{Rotation and a fixed point theorem} \label{subsec:topol}

To provide a general definition of rotation number, we introduce the \emph{clockwise} polar coordinates $(\theta,r)$, according to the change of coordinates $\Psi\colon\cl\halfPlane \to \R^2$,
\begin{equation}
\label{eq:polar_coords} 
(x,y)=\Psi(\theta,r)=(r\cos\theta,-r\sin\theta),
\end{equation}
where $\halfPlane=\R\times(0,+\infty)$ and $\cl\halfPlane=\R\times[0,+\infty)$ denote the open and closed half-plane, respectively.

Let us consider the continuous evolution map $\phi\colon[0,T]\times E\to \R^{2}$ defined on an open set $E\subseteq\R^2$, with $\phi(0,\cdot)=\Id_{E}$. 
We define the set $\Null$ as
\begin{equation}
    \label{eq:general_set_N}
    \Null=\left\{
    z_{0}\in E:\,\exists t\in[0,T]\text{ such that }\phi(t,z_{0})=0_{\R^{2}}
    \right\}\,,
\end{equation}
namely, the set of initial data $z_{0}$ whose trajectory reaches the origin within $[0,T]$. 

Thus the set $E_{0}\coloneqq E\setminus \Null$ consists of those points in $E$ whose $\phi$-orbit does not pass through the origin of $\R^2$. Recalling that $(\Psi,\halfPlane)$ is a covering space of $\R^2\setminus\{0\}$, and setting $\mathcal{E}_{0}\coloneqq \Psi^{-1}(E_{0})$, \cite[Theorem~11]{Gid23} ensures the existence of a lift $\Phi\colon[0,T]\times \mathcal{E}_{0}\to\halfPlane$ in clockwise polar coordinates of $\phi$, restricted to $E_{0}$, i.e. a continuous $\Phi$ such that
\begin{equation}
    \Phi(0,\cdot)=\Id_{\mathcal{E}_{0}}\,,\qquad \Psi(\Phi(t,\cdot))=\phi(t,\Psi(\cdot))\,.
\end{equation}
We denote the two components of $\Phi$ as $\Phi=(\Phi^\theta,\Phi^r)$.

For any $t\in[0,T]$, the rotation associated with the point $z_0\in E_0$ over the interval $[0,t]$ is defined as
\begin{equation}
\label{eq:abstract_rotation_index}
\rot_t(z_{0})\coloneqq\frac{\Phi^\theta(t,\eta_0)-\Phi^\theta(0,\eta_0)}{2\pi}\,,
\end{equation}
for any $\eta_0=(\theta_0,r_0)\in\Psi^{-1}(z_0)$. The definition is well-posed thanks to the periodicity in $\theta$ of $\Psi$ in \eqref{eq:polar_coords}. Accordingly, for any subset $U\subseteq E_0$ we write $\rot_t(U)\coloneqq\{\rot_{t}(z):z\in U\}$.

Our main result relies on the following fixed-point theorem.
\begin{theorem}
    \label{thm:fixed_point}
    Let $U\subset\R^{2}$ be an open bounded set with $0_{\R^{2}}\in U$. Suppose that $\phi\colon[0,T]\times \cl U\to\R^{2}$ is a continuous map such that $\phi(0,z)=z$ for every $z\in \cl U$ and $\Null\cap \partial U=\emptyset$. If
    \begin{equation}
        \rot_T(\partial U)\cap\Z=\emptyset\,,
    \end{equation}
    then $\deg(\phi(T,\cdot)-\Id_{\R^{2}},U,0)=1$. In particular, $\phi(T,\cdot)$ admits a fixed point $z\in U$.
\end{theorem}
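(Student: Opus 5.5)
The proof is a homotopy argument: deform $\phi(T,\cdot)-\Id$ to $-\Id$ along the evolution itself, and use the rotation hypothesis to rule out zeros on $\partial U$ during the deformation. Homotopy invariance of the Brouwer degree then gives
\[
\deg(\phi(T,\cdot)-\Id_{\R^2},U,0)=\deg(-\Id_{\R^2},U,0).
\]
Since $0\in U$ and $-\Id$ is linear with determinant $1$ in the plane, the right-hand side equals $1$. Existence of a fixed point then follows from the solution property of the degree.

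Concretely, I would define $h(s,z)=\phi(sT,z)-z$ for $s\in[0,1]$ and $z\in\cl U$, which is continuous. At $s=0$ it vanishes identically, so it cannot serve as the starting end of the homotopy. For this reason I would use a two-stage deformation instead of $h$ directly.

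The first stage is to realize the rotation in polar coordinates. On $\partial U$ we have $\partial U\cap\Null=\emptyset$, so every orbit starting on $\partial U$ avoids the origin on $[0,T]$. Hence the lift $\Phi$ is defined on $[0,T]\times\Psi^{-1}(\partial U)$, and by compactness $\abs{\phi(t,z)}\geq\delta>0$ there. Write $\phi(T,z)=\Psi(\theta_0+2\pi\rho(z),r_1(z))$, where $z=\Psi(\theta_0,r_0)$ and $\rho(z)=\rot_T(z)$ is continuous and nonintegral on $\partial U$. The key observation is that $\phi(T,z)=z$ forces $\rho(z)\in\Z$. More generally, $\phi(T,z)=\lambda z$ with $\lambda>0$ also forces $\rho(z)\in\Z$.

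Next I would compare $\phi(T,\cdot)$ on $\partial U$ with a map that rotates each point by $2\pi\rho(z)$ while keeping the radius:
\[
G_\sigma(z)=\Psi\bigl(\theta_0+2\pi\rho(z),(1-\sigma)r_1(z)+\sigma r_0\bigr),\qquad \sigma\in[0,1].
\]
For all $\sigma$ the angle is unchanged and the radius stays positive, so $G_\sigma(z)=z$ would again need $\rho(z)\in\Z$. Thus $G_\sigma-\Id$ has no zeros on $\partial U$, and the degree of $\phi(T,\cdot)-\Id$ equals that of $G_1-\Id$. The degree depends only on boundary values, and the maps agree at $\sigma=0$ on $\partial U$, so it suffices to work on $\partial U$ and extend arbitrarily.

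The second stage deforms $\rho$ to $\tfrac12$. By connectedness, $\rho$ takes values in a single open interval $(k,k+1)$ on each connected component of $\partial U$. The homotopy $\rho_\tau=(1-\tau)\rho+\tau(k+\tfrac12)$ stays in $(k,k+1)$ on that component, so again no fixed points appear on $\partial U$. At the end of this stage every boundary point is sent to $-z$, and the degree equals $\deg(-\Id-\Id,U,0)=\deg(-2\Id,U,0)=1$ because $0\in U$.

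The main obstacle I expect is the boundary bookkeeping. The degree is determined only by the values on $\partial U$, but $G_\sigma$ is only defined on $\partial U$ through the lift. To pass from boundary homotopies to degrees I would use the fact that, for continuous maps on $\cl U$, the degree of $f-\Id$ depends only on the homotopy class of $f-\Id$ in the space of maps $\partial U\to\R^2\setminus\{0\}$. By Tietze's theorem, any boundary homotopy extends to $\cl U$, so homotopy invariance applies. The second point to check is that $\partial U$ may have several components with different integer parts $k$. This does not matter, since each component is handled separately and the final boundary map $-\Id$ is the same on all of them.
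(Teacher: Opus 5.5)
Your argument is correct, but it is longer than the route the paper relies on. The paper gets the statement as a corollary of the Poincaré--Bohl theorem: it compares $f=\phi(T,\cdot)-\Id_{\R^2}$ directly with $g=-\Id_{\R^2}$ through the straight-line homotopy $(1-s)f+sg$ on $\cl U$.

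For $s\in(0,1)$, a zero of this homotopy at $z\in\partial U$ means $\phi(T,z)=(1+\mu)z$ with $\mu=s/(1-s)>0$. The endpoints give nothing new: $s=0$ is the case $\mu=0$, and $s=1$ is impossible since $0\notin\partial U$. In every case $\phi(T,z)$ would be a positive multiple of $z$, which forces $\rot_T(z)\in\Z$. That is exactly the remark you already make in your first stage ("$\phi(T,z)=\lambda z$ with $\lambda>0$ also forces $\rho(z)\in\Z$"). Used directly, it closes the proof at once: $\deg(\phi(T,\cdot)-\Id_{\R^2},U,0)=\deg(-\Id_{\R^2},U,0)=1$. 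You then need neither the auxiliary maps $G_\sigma$, nor the second deformation of $\rho$, nor the Tietze extension, since the linear homotopy is already defined on all of $\cl U$.

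Your two-stage boundary homotopy is nevertheless sound. The lift makes $\rho=\rot_T$ continuous on $\partial U$, and $G_0=\phi(T,\cdot)$ there. Neither $G_\sigma-\Id_{\R^2}$ nor $z\mapsto\Psi(\theta_0+2\pi\rho_\tau(z),\abs{z})-z$ can vanish on $\partial U$, because $\rho$ and $\rho_\tau$ are never integers. The final boundary map $-2\,\Id_{\R^2}$ has degree $1$ because $0\in U$. What your route buys is an explicit picture of $\phi(T,\cdot)$ on $\partial U$ as a pure rotation by $2\pi\rho(z)$, paid for with more bookkeeping than the single Poincaré--Bohl check.

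One point of that bookkeeping should be tightened. Instead of arguing component by component, define $k(z)=\lfloor\rho(z)\rfloor$ directly. It is locally constant, hence continuous, because $\rho$ is continuous and avoids $\Z$. Then $\rho_\tau$ is a single jointly continuous homotopy even when $\partial U$ has infinitely many components.
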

This theorem can be seen as a corollary of the Poincaré--Bohl Theorem, as proved in~\cite[Theorem~1]{Gid23}.

\subsection{The planar system}

In this paper, the evolution map $\phi$ considered will be the one associated with the solution of the planar system of differential equations \eqref{eq:planar}, where $z=(x,y)\in\R^{2}$ and $F\colon\R\times \R^{2}\to \R^{2}$ is $T$-periodic in $t$ and satisfies the Carathéodory conditions. We recall that a function $F(t,z)$, defined on an open subset of $\R\times\R^{d}$, is said to satisfy the Carathéodory conditions if:
    \begin{itemize}
        \item for every $z\in\R^{d}$, the map $t\mapsto F(t,z)$ is measurable;
        \item for almost every $t\in \R$, the map $z\mapsto F(t,z)$ is continuous;
        \item for every compact set $[a,b]\times K\subset \R\times \R^{d}$ there exists a function $m_{K}\in L^{1}([a,b])$ such that $\abs{F(t,z)}\leq m_{K}(t)$ for a.e. $t\in [a,b]$ and every $z\in K$. 
    \end{itemize}
Under such assumptions on $F$, there is local existence of solutions for \eqref{eq:planar} and we have the following continuous dependence property (see, e.g., \cite[Ch.~1, \textsection~1]{Filippov}). 

\begin{theorem}[Continuous dependence]
\label{th:continuous_dependence}
    Let $F = F(t, z) \colon \R \times \R^2 \to \R^2 $ satisfy the Carathéodory conditions. Assume that the solution $z(t)=\zeta(t; \bar t, \bar z)$ of the problem
\begin{equation}\label{eq:Cauchy_gen}
\begin{cases}
    \dot{z}(t) = F(t, z(t)) \\
    z(\bar t) = \bar z
\end{cases}
\end{equation}
exists on the interval $[t_a, t_b]$  and is unique. Then there exists $\delta > 0$ such that, for every $(\hat{t}, \hat{z})$ with
\begin{equation}
\abs{\bar t - \hat{t}} + \abs{\bar z - \hat{z}} < \delta \,,
\end{equation}
the solution $ \zeta(t; \hat{t}, \hat{z}) $ of the Cauchy problem defined by $ z(\hat{t}) = \hat{z} $ exists on the interval $[t_a, t_b]$. Moreover, $ \zeta(s; \hat{t}, \hat{z}) $ is a continuous function of $ (s; \hat{t}, \hat{z}) $ at $(t; \bar t, \bar z) $.
\end{theorem}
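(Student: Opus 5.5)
The idea is the classical Carathéodory route: reduce the Cauchy problem to a fixed-point problem for an integral operator on a tube around the given solution. Uniqueness then upgrades subsequential convergence to convergence of the whole family.

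\emph{Step 1 (compact neighbourhood and uniform bound).} Set $\gamma=\{(t,\zeta(t;\bar t,\bar z)) : t\in[t_a,t_b]\}$. Fix $\rho>0$ and take the compact tube $K_\rho=\{(t,z): t\in[t_a-\rho,t_b+\rho],\ \abs{z-\zeta(t)}\le\rho\}$, where $\zeta$ is extended to $[t_a-\rho,t_b+\rho]$ as a solution, shrinking $\rho$ if needed. By the Carathéodory conditions there is $m\in L^1$ with $\abs{F(t,z)}\le m(t)$ on $K_\rho$. Choose $\delta>0$ so that
\[
\delta+\sup_{\abs{s-s'}\le\delta}\int_{s}^{s'}m(\tau)\dd\tau<\rho/2 .
\]
This uses the absolute continuity of the integral.

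\emph{Step 2 (existence on the whole interval).} Let $\abs{\bar t-\hat t}+\abs{\bar z-\hat z}<\delta$ and take any solution through $(\hat t,\hat z)$; Peano/Carathéodory local existence provides one. I then estimate $w(t)=z(t)-\zeta(t)$ as long as the solution stays in $K_\rho$. We have $\abs{w(\hat t)}\le\abs{\hat z-\bar z}+\abs{\zeta(\hat t)-\zeta(\bar t)}<\rho/2$. The goal is to show that the solution cannot reach $\partial K_\rho$ before $t_a$ or $t_b$.

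This is the main obstacle: no Lipschitz bound is available, so Gronwall cannot be applied directly. I would instead argue by contradiction using compactness. Suppose there are $(\hat t_n,\hat z_n)\to(\bar t,\bar z)$ with solutions $z_n$ that leave the tube of radius $\rho/2$ inside $[t_a,t_b]$. The $z_n$, restricted to the times they spend in $K_\rho$, are equi-absolutely-continuous, being dominated by $\int m$. By Arzelà–Ascoli a subsequence converges uniformly. Passing to the limit in $z_n(t)=\hat z_n+\int_{\hat t_n}^t F(s,z_n(s))\dd s$ is justified by dominated convergence together with continuity of $F$ in $z$ for a.e.\ $t$. The limit is a solution through $(\bar t,\bar z)$ that reaches distance $\rho/2$ from $\zeta$, which contradicts uniqueness. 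Hence for $\delta$ small (and $\rho$ arbitrarily small), every solution from nearby data stays within $\rho$ of $\zeta$ on $[t_a,t_b]$. In particular it is continuable there, since it remains in a compact set where $F$ is integrably bounded.

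\emph{Step 3 (continuity).} The same contradiction argument, with $\rho$ replaced by an arbitrary $\epsilon$, gives $\sup_{[t_a,t_b]}\abs{\zeta(\cdot;\hat t,\hat z)-\zeta(\cdot;\bar t,\bar z)}\to0$ as $(\hat t,\hat z)\to(\bar t,\bar z)$. Joint continuity in $s$ then follows from
\[
\abs{\zeta(s;\hat t,\hat z)-\zeta(t;\bar t,\bar z)}\le\sup\abs{\zeta(\cdot;\hat t,\hat z)-\zeta(\cdot;\bar t,\bar z)}+\abs{\zeta(s)-\zeta(t)}.
\]
Here nearby solutions need not be unique, so the statement is read for any choice of solution $\zeta(s;\hat t,\hat z)$, and the argument above covers all such choices simultaneously.
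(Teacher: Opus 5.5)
Your proposal is correct. The paper gives no proof of its own and simply cites Filippov (Ch.~1, \S1), whose proof is essentially your argument: exit times from a compact tube around the given solution, equi-absolute continuity from the integrable bound $m$, Arzelà--Ascoli, passage to the limit in the integral equation by dominated convergence, and uniqueness ruling out the limit solution.
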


Throughout the paper,  we use $z_{0}$ for the initial data of a solution of \eqref{eq:planar} such that $z(0)=z_{0}$, while $(\bar{t},\bar{z})$ for a generic Cauchy problem $z(\bar{t})=\bar{z}$ with $\bar{t}\in[0,T]$, unless otherwise specified.
The function $\zeta(t; \bar{t}, \bar{z})$ denotes the solution of the general Cauchy problem \eqref{eq:Cauchy_gen}, for any value of $t\in\R$ on which it is defined.

For our purposes, we will focus mostly, but not exclusively, on the initial value problem \eqref{eq:cauchyIVP}. In particular, such an initial value problem will be used to introduce the notions of Subsection~\ref{subsec:topol} for the planar system \eqref{eq:planar}. More precisely, we set
\begin{equation}\label{eq:def_phi}
	\phi(t,z_0)\coloneqq \zeta(t;0,z_0)\,,
\end{equation}
where a suitable domain for $\phi$ will be found in the next section.
It is easily verified that the clockwise rotation number of $\zeta(t;0,z_{0})$ introduced in \eqref{eq:abstract_rotation_index} can then be formulated in this framework as
\begin{equation}
	\label{eq:rot_index}
	\rot_t(z_{0})= \frac{1}{2\pi}\int_{0}^{t}\frac{\pscl{F(s,\phi(s,z_0))}{J\phi(s,z_0)}}{\abs{\phi(s,z_0)}^{2}}\,\dd s \,,
\end{equation}
where $J$ is the standard symplectic matrix
\begin{equation*}
	J=\begin{pmatrix}
		0 & 1\\
		-1& 0
	\end{pmatrix}
\end{equation*}
which is also corresponding to a $\frac{\pi}{2}$ clockwise rotation.

It will be useful to extend a generalized notion of rotation also to solutions blowing up in $[0,T]$, but without crossing the origin $0_{\R^2}$ first. More precisely, we define $\GRot\colon [0,T]\times \bigl(\R^{2}\setminus \Null\bigr)\to\R\cup\{+\infty\}$ as
\begin{equation}
	\label{eq:extended_rotat_index}
	\GRot_t(z_{0})=\begin{cases}
		\rot_t(z_{0}) & \text{if it is well defined},\\
		+\infty & \text{otherwise}.
	\end{cases}
\end{equation}
The reason for this choice will be clear in light of Assumption~\ref{hyp:A4}, which will be pivotal to obtain the continuity of $\GRot$ in Lemma~\ref{lem:continuity_GRot}. Note, however, that this definition does not create problems to the application of Theorem~\ref{thm:fixed_point}, since it keeps blow-up solutions apart.

\section{Main result} \label{sec:main}

We study the planar system \eqref{eq:planar} under the following assumptions:
\begin{enumerate}[label=\textup{(A\arabic*)}]
    \item \label{hyp:A1} the function $F$ satisfies the \emph{Carathéodory conditions} and is $T$-periodic in the time variable $t$;
    \item \label{hyp:A2} for every $(\bar{t},\bar{z})\in\R\times \R^{2}$ the solution of the Cauchy problem \eqref{eq:Cauchy_gen} is unique; 
    \item \label{hyp:A3} (continuability hypothesis) for every $\bar{t}\in[0,T]$, the solution to the Cauchy problem 
    \begin{equation*}
    \begin{cases}
        \dot{z}=F(t,z)\\
        z(\bar{t})=0_{\R^{2}}
    \end{cases}
    \end{equation*}
    can be continued to the whole interval $[0,T]$;
    \item \label{hyp:A4} (rotation of blow-up solutions) for every $z_0$ such that the solution of the initial value problem \eqref{eq:cauchyIVP} blows up at a finite time $T_\mathrm{F}\in(0,T]$, we have
    \begin{equation*}
    \lim_{t\to T_\mathrm{F}^{-}}\rot_t(z_0)=+\infty\, ;
    \end{equation*}
    \item \label{hyp:A5} (growth at infinity of $\GRot_T$)  
    \begin{equation*}
    \lim_{\abs{z_0}\to+\infty}\GRot_T(z_0)=+\infty\, ;
    \end{equation*}
    \item \label{hyp:A6} (uniform asymptotic lower bound on the angular velocity) the field $F$ satisfies
    \begin{equation*}
    \liminf_{\abs{z}\to+\infty}\frac{\pscl{F(t,z)}{Jz}}{\abs{z}^{2}}=\ell>-\infty
    \end{equation*}
    uniformly for a.e. $t\in [0,T]$.
\end{enumerate}

Under these six assumptions we obtain the main result of the paper.

\begin{theorem}
    \label{thm:main_theorem}
    Suppose that the six assumptions \ref{hyp:A1}--\ref{hyp:A6} hold. Then the planar system \eqref{eq:planar} admits at least one $T$-periodic solution.
\end{theorem}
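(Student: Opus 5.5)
The strategy is to build an open bounded set $U\ni 0$ on which the Poincaré map $\phi(T,\cdot)$ is defined and continuous on $\cl U$. We will arrange that $\partial U$ avoids $\Null$ and that $\rot_T(\partial U)\cap\Z=\emptyset$, and then apply Theorem~\ref{thm:fixed_point}. A fixed point $z_0=\phi(T,z_0)$ gives a $T$-periodic solution by the $T$-periodicity of $F$ and uniqueness \ref{hyp:A2}.

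\textbf{Step 1: properties of $\Null$.} First we show that $\Null$ is compact and that every $z_0\in\Null$ has a solution defined on all of $[0,T]$. If $\phi(\bar t,z_0)=0$, then $\zeta(\cdot;\bar t,0)$ is continuable to $[0,T]$ by \ref{hyp:A3}, and by uniqueness it coincides with $\phi(\cdot,z_0)$. Hence $\Null=\{\zeta(0;\bar t,0):\bar t\in[0,T]\}$. The map $\bar t\mapsto \zeta(0;\bar t,0)$ is continuous by Theorem~\ref{th:continuous_dependence}, so $\Null$ is the continuous image of $[0,T]$ and therefore compact. Continuous dependence also shows that the set $E$ of initial data whose solutions exist on $[0,T]$ is open, and that it contains a neighbourhood of $\Null$.

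\textbf{Step 2: continuity of $\GRot_T$.} Next we prove that $\GRot_T\colon\R^2\setminus\Null\to\R\cup\{+\infty\}$ is continuous, with the topology of $(-\infty,+\infty]$. At points of $E\setminus\Null$ this follows from the lift $\Phi$ and continuous dependence. At a point $z_0$ whose solution blows up at time $T_F\le T$, we argue as follows. Given $M$, by \ref{hyp:A4} we choose $s<T_F$ with $\rot_s(z_0)>M+1$ and $\abs{\phi(t,z_0)}$ as large as we wish for all $t\in[s,T_F)$. By continuous dependence, nearby $z$ satisfy $\rot_s(z)>M+1/2$. We must then control the remaining rotation of $z$ on $[s,T]$, or up to its own blow-up time. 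Here \ref{hyp:A6} provides a constant $\rho$ such that the angular velocity exceeds $\ell-1$ whenever $\abs{z}\ge\rho$. Excursions of the trajectory below radius $\rho$ must be handled separately: we would compare with a second time $s'$ close to $T_F$ at which the nearby solution is still very large. We expect this to be the main obstacle, because a nearby solution could exit to a small radius and unwind there. The plan is to use \ref{hyp:A4} again: a nearby solution follows $z_0$'s trajectory up to times arbitrarily close to $T_F$, so it accumulates more than $M+1+\abs{\ell}T$ turns. Any later unwinding at large radii is bounded by $\abs{\ell-1}T/2\pi$. Whether the solution then passes through small radii, where the rotation may behave badly, needs an extra argument. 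We would first try to show that, for $z$ close to $z_0$, the trajectory cannot return to $B[0,\rho]$ before time $T$, shrinking the neighbourhood if necessary. If this fails, we would exploit the fact that returning to $B[0,\rho]$ would force proximity to solutions through the compact set $\{\zeta(t;\bar t,\bar z):\abs{\bar z}\le\rho\}$.

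\textbf{Step 3: construction of $U$.} By \ref{hyp:A5} we choose $R$ with $\Null\subset B(0,R)$ and $\GRot_T>N$ outside $B(0,R)$, where $N$ is an integer. Let $W$ be the connected component containing $\cl B(0,R)$ of the open set $\{z:\GRot_T(z)<N+1/2\}\cup\Null$; we take this set to be open by Step 2 and by continuity near $\Null$. The set $W$ is bounded. We take $U=W$. On $\partial U$ we have $\GRot_T=N+1/2$, so $\partial U$ consists of points with finite rotation, hence lies in $E$, and $\rot_T(\partial U)\cap\Z=\emptyset$. Moreover $\cl U\subset E$, so $\phi$ is continuous on $[0,T]\times\cl U$, and $\Null\cap\partial U=\emptyset$. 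Theorem~\ref{thm:fixed_point} therefore gives a fixed point in $U$.
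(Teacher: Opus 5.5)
\paragraph{Verdict.} Your overall strategy matches the paper's: apply Theorem~\ref{thm:fixed_point} on a set bounded by a level set of $\GRot_T$. But there are two genuine gaps, and both come from the same missing ingredient.

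\paragraph{Step 2: continuity at blow-up points.} You leave this step open exactly where the work lies, and neither fallback succeeds.
\begin{itemize}
\item Nothing in \ref{hyp:A1}--\ref{hyp:A6} keeps nearby surviving solutions out of $B[0,\rho]$. \ref{hyp:A6} constrains only the angular velocity, so such a solution may be carried back to moderate radii (e.g.\ after $T_F$).
\item Solutions through $B[0,\rho]$, with $\rho$ the large radius from \ref{hyp:A6}, may themselves blow up, so they do not form a compact family.
\end{itemize}
What you are missing is that moderate radii are harmless. For any $0<\delta<\rho$, the Carathéodory bound on the compact annulus $\delta\le\abs{z}\le\rho$ gives $\pscl{F(t,z)}{Jz}/\abs{z}^2\ge -m(t)/\delta$ with $m\in L^1$. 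Combined with \ref{hyp:A6}, this yields $a_\delta\in L^1(0,T)$ with $\rot_t(z)\ge\rot_s(z)-\int_s^t a_\delta$ along every trajectory that stays outside $B(0,\delta)$.

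Only a small ball around the origin is dangerous, and there \ref{hyp:A3} must be used uniformly in time. By continuous dependence and compactness of $[0,T]\times\{0\}$, there is $r^*>0$ such that every solution meeting $B(0,r^*)$ at some time in $[0,T]$ exists on all of $[0,T]$. Hence, for $\delta<r^*$, the set $\DD_\delta$ of initial data whose trajectory meets $B[0,\delta]$ during $[0,T]$ is compact: it is the image of $[0,T]\times B[0,\delta]$ under $(\bar t,\bar z)\mapsto\zeta(0;\bar t,\bar z)$. It also contains no blow-up point. So a blow-up point $z_0$ has a neighbourhood disjoint from $\DD_\delta$. Choosing $t_k<T_F$ with $\rot_{t_k}(z_0)>k+\int_0^T a_\delta$ then gives $\GRot_t(z)>k$ for all $t\ge t_k$ and all $z$ near $z_0$.

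\paragraph{Step 3: construction of $U$.} This step fails as written.
\begin{itemize}
\item The set $\{\GRot_T<N+\tfrac12\}\cup\Null$ need not be open, because $\rot_T$ is not controlled near $\Null$. For instance, take $F(t,z)=\abs{z}^{-1/2}Jz$ near the origin, completed away from it so that \ref{hyp:A1}--\ref{hyp:A6} hold. Radii are conserved near the origin, and $\rot_T(z_0)=T\abs{z_0}^{-1/2}/(2\pi)\to+\infty$ as $z_0\to 0\in\Null$.
\item The component ``containing $\cl B(0,R)$'' presupposes $\GRot_T<N+\tfrac12$ on $\cl B(0,R)\setminus\Null$. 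Your choice of $N,R$ does not provide this; for example, blow-up points inside $B(0,R)$ have $\GRot_T=+\infty$.
\end{itemize}
The remedy is again $\DD_\delta$. Since $\Null\subset\interior\DD_\delta$ (by continuous dependence), $\rot_T$ is finite and continuous on the compact set $\partial\DD_\delta$, hence bounded there by some integer $\bar n$. Then set $U=\interior\DD_\delta\cup\{\GRot_T<\bar n+\tfrac12\}$. This set is open, bounded by \ref{hyp:A5}, and contains $\Null$. Its boundary lies outside $\DD_\delta$, because $\partial\DD_\delta$ sits inside the open sublevel set. Outside $\DD_\delta$ the function $\GRot_T$ is continuous, so $\GRot_T(\partial U)=\{\bar n+\tfrac12\}$.
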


We emphasize that \ref{hyp:A3} is not a global condition, but can be achieved by a local bound on a suitable neighbourhood of the origin. For instance, \ref{hyp:A3} is satisfied if 
\begin{enumerate}[label=\textup{($\star$)}]
\item there exists $c>0$ such that $\abs{\langle F(t,z),z\rangle}\leq c \abs{z}$ for a.e.~$t\in [0,T]$ and every $z\in B(0,cT)$. \label{cond:A3estimate}
\end{enumerate}
More sophisticated conditions can be proposed in the same spirit.

%%%%%%%%%%%%%%%%%%%%%%%%%%%%%%%%%%%%%%%%%%%%%%%%%%%%%%%%%%

\section{Proof of Theorem \ref{thm:main_theorem}} \label{sec:proof}

\subsection{Behaviour in a neighbourhood of $\Null$}
Let \ref{hyp:A1} and \ref{hyp:A2} hold. We define $\zeta_0$ as the function that associates to a pair $(\bar t,\bar z)$ the value $\zeta_0\coloneqq \zeta(0;\bar t,\bar z)$ at time $t=0$ of the solution $\zeta(t;\bar t,\bar z)$ of the Cauchy problem \eqref{eq:Cauchy_gen}, if such solution can be continued to $t=0$. Clearly the domain $\dom \zeta_0$ might be a strict subset of $\R\times\R^2$. However, by \ref{hyp:A3}, we know that $[0,T]\times \{0_{\R^2}\}\subset\dom \zeta_0$ and therefore, by \eqref{eq:general_set_N}, that $\Null=\zeta_0([0,T]\times \{0_{\R^2}\})$. 
We plan to extend this structure to sufficiently small cylinders near $[0,T]\times \{0_{\R^2}\}$.

\begin{prop}\label{prop:Dcompact}	
	Let \ref{hyp:A1}, \ref{hyp:A2} and \ref{hyp:A3} hold.
	Then there exists $r^*>0$ such that for every $(\bar t,\bar z)\in[0,T]\times B(0,r^*)$ the solution of the corresponding Cauchy problem \eqref{eq:Cauchy_gen} can be continued on the interval $t\in[0,T]$. In particular, this implies that $[0,T]\times B(0,r^*)\subseteq \dom \zeta_0$. Moreover, defining for $0\leq \delta<r^*$ the set
    \begin{equation}
		\label{eq:def_D_delta}
		\DD_{\delta}\coloneqq\{z:\;\exists t\in[0,T]\ \text{such that}\ \zeta(t;0,z)\in B[0,\delta]\}\,,
	\end{equation}
all such sets $\DD_\delta$ are compact.
\end{prop}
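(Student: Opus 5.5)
The argument is a compactness argument built on Theorem~\ref{th:continuous_dependence}, applied along the compact segment $[0,T]\times\{0_{\R^2}\}$.

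\emph{Existence of $r^*$.} For every $\bar t\in[0,T]$, assumption \ref{hyp:A3} together with uniqueness \ref{hyp:A2} tells us that the solution $\zeta(t;\bar t,0)$ exists and is unique on $[0,T]$. By Theorem~\ref{th:continuous_dependence} (with $[t_a,t_b]=[0,T]$) there is $\delta_{\bar t}>0$ such that, whenever $\abs{\hat t-\bar t}+\abs{\hat z}<\delta_{\bar t}$, the solution $\zeta(\cdot;\hat t,\hat z)$ exists on $[0,T]$. The open sets $\{(\hat t,\hat z):\abs{\hat t-\bar t}+\abs{\hat z}<\delta_{\bar t}\}$ cover the compact set $[0,T]\times\{0\}$. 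A Lebesgue-number argument, or simply a finite subcover followed by taking $r^*$ smaller than the resulting tube radius, gives $r^*>0$ such that $[0,T]\times B(0,r^*)$ lies in the union. Hence every $(\bar t,\bar z)\in[0,T]\times B(0,r^*)$ generates a solution continuable to $[0,T]$, and in particular $[0,T]\times B(0,r^*)\subseteq\dom\zeta_0$.

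\emph{Compactness of $\DD_\delta$.} Fix $0\le\delta<r^*$. The set $K=[0,T]\times B[0,\delta]$ is a compact subset of $[0,T]\times B(0,r^*)$, and uniqueness gives
\[
\DD_\delta=\zeta_0(K)\,,
\]
for the following reason. If $\zeta(t;0,z)\in B[0,\delta]$ for some $t$, then $z=\zeta_0(t,\zeta(t;0,z))$. Conversely, if $(\bar t,\bar z)\in K$, then the solution through $(\bar t,\bar z)$ exists on $[0,T]$ and passes through $\bar z$ at time $\bar t$, so $\zeta_0(\bar t,\bar z)\in\DD_\delta$. It remains to check that $\zeta_0$ is continuous on $[0,T]\times B(0,r^*)$. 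This follows from the second part of Theorem~\ref{th:continuous_dependence}: the map $(s;\hat t,\hat z)\mapsto\zeta(s;\hat t,\hat z)$ is continuous at each point where the solution exists on $[0,T]$, and we evaluate it at $s=0$. Since a continuous image of a compact set is compact, $\DD_\delta$ is compact.

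\emph{Main obstacle.} The only delicate point is making sure that Theorem~\ref{th:continuous_dependence} yields uniform existence on the whole interval $[0,T]$ (both forwards and backwards from $\hat t$), and not merely near $\bar t$. This is exactly what the theorem provides once the reference solution exists on $[t_a,t_b]=[0,T]$, which is the role of \ref{hyp:A3}. The uniformity of the radius in $\bar t$ comes from the finite subcover. Continuity of $\zeta_0$ requires no extra work, since it is part of the same theorem.
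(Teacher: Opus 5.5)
Your proposal is correct and follows essentially the same route as the paper: continuous dependence around each point of $[0,T]\times\{0_{\R^2}\}$, a finite subcover to obtain a uniform radius $r^*$, and then $\DD_\delta=\zeta_0([0,T]\times B[0,\delta])$ as a continuous image of a compact set. The differences are cosmetic. The paper first uses openness of the maximal interval to continue solutions to $[-s_\alpha,T+s_\alpha]$ and works with product neighbourhoods $(t_\alpha^-,t_\alpha^+)\times B(0,r_\alpha)$, so that $r^*$ is just a minimum of radii and $\zeta_0$ is continuous on an open cylinder. You instead apply Theorem~\ref{th:continuous_dependence} directly on $[0,T]$, which requires a short tube-lemma step, and your explicit check of $\DD_\delta=\zeta_0(K)$ via uniqueness spells out what the paper leaves as ``by construction''.
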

\begin{proof}
Let us take any $t_\alpha\in[0,T]$. By \ref{hyp:A3} and the fact that the maximal interval of existence is always open, there exists $s_\alpha>0$ such that the solution $\zeta(t;t_\alpha,0)$ can be continued on the closed interval $[0-s_\alpha,T+s_\alpha]$. By Theorem~\ref{th:continuous_dependence} there exists a cylindrical open neighbourhood $(t_\alpha^-,t_\alpha^+)\times B(0,r_\alpha)$ of $(t_\alpha,0_{\R^2})$ such that for every $(\bar t,\bar z)\in (t_\alpha^-,t_\alpha^+)\times B(0,r_\alpha)$ the solution $\zeta(t;\bar t,\bar z)$ of the Cauchy problem \eqref{eq:Cauchy_gen} can be continued to $[0-s_\alpha,T+s_\alpha]$.

Since all the open sets $(t_\alpha^-,t_\alpha^+)\times B(0,r_\alpha)$ form an open cover of the compact set $[0,T]\times \{0_{\R^2}\}$, there exists a finite subcover with indices $\alpha_1,\dots,\alpha_\ell$. Let us set
\begin{align*}
t_*^-\coloneqq\min_{i=1,\dots,\ell} t^-_{\alpha_i}<0 \,, && t_*^+\coloneqq\max_{i=1,\dots,\ell} t^+_{\alpha_i}>T \,, \\ r^*\coloneqq\min_{i=1,\dots,\ell} r_{\alpha_i}>0 \,,&& s^*\coloneqq\min_{i=1,\dots,\ell} s_{\alpha_i}>0 \,.
\end{align*}
By construction, the set $(t_*^-,t_*^+)\times B(0,r^*)$ is an open neighbourhood of $[0,T]\times\{0_{\R^2}\}$ and, for every $(\bar t,\bar z)\in[0,T]\times B(0,r^*)$ the solution of the Cauchy problem \eqref{eq:Cauchy_gen} can be continued on $[0-s^*,T+s^*]$. 

This means that the function  $\zeta_0$ is well-defined and continuous on the open set $(t_*^-,t_*^+)\times B(0,r^*)$, of which the sets $[0,T]\times B[0,\delta]$ are compact subsets for every $0\leq\delta<r^*$. Since, by construction, $\DD_\delta=\zeta_0([0,T]\times B[0,\delta])$, it follows that the sets $\DD_\delta$ are compact being the images of compact sets with respect to the continuous map $\zeta_0$.
\end{proof}	

Since $\DD_0=\Null$, we have proved the following fact. 

\begin{corollary}\label{cor:Ncompact}
	If \ref{hyp:A1}, \ref{hyp:A2} and \ref{hyp:A3} hold, then $\Null$ is compact.
\end{corollary}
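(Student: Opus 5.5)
This is immediate from Proposition~\ref{prop:Dcompact}, taking $\delta=0$. The plan is first to check that $\DD_0=\Null$, and then to read off compactness from the proposition.

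\emph{Identification of $\DD_0$ with $\Null$.} Since $B[0,0]=\{0_{\R^2}\}$, definition \eqref{eq:def_D_delta} gives
\[
\DD_0=\{z:\ \exists t\in[0,T] \text{ such that } \zeta(t;0,z)=0_{\R^2}\}.
\]
This is exactly the set $\Null$ of \eqref{eq:general_set_N}, with $\phi(t,z)=\zeta(t;0,z)$ as in \eqref{eq:def_phi}.

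One point needs care. The definition of $\Null$ refers to the domain $E$ of $\phi$, so we must make sure that $\DD_0$ contains no point whose solution blows up before reaching the origin. To see this, take $z\in\DD_0$ with $\zeta(t;0,z)=0$ for some $t\in[0,T]$. By uniqueness \ref{hyp:A2}, the solution through $z$ coincides with $\zeta(\cdot\,;t,0)$. By \ref{hyp:A3}, $\zeta(\cdot\,;t,0)$ extends to all of $[0,T]$, hence so does the solution through $z$.

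It follows that $z\in E$ for any admissible open domain $E$ on which the flow is defined up to time $T$. Equivalently, $\Null=\zeta_0([0,T]\times\{0_{\R^2}\})$, as already observed before the proposition.

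\emph{Compactness.} Proposition~\ref{prop:Dcompact} gives that $\DD_\delta$ is compact for every $0\le\delta<r^*$, and in particular for $\delta=0$. Hence $\Null=\DD_0$ is compact.

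For a direct argument: $\zeta_0$ is continuous on the open set $(t_*^-,t_*^+)\times B(0,r^*)$, which contains the compact set $[0,T]\times\{0\}$. The image of a compact set under a continuous map is compact.

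There is no real obstacle here. The only subtlety is the domain issue handled in the first step, and it is settled by \ref{hyp:A2} together with \ref{hyp:A3}.
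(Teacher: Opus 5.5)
Your proposal is correct and is exactly the paper's argument. The paper simply notes $\DD_0=\Null$ and applies Proposition~\ref{prop:Dcompact} with $\delta=0$; that proposition's proof is precisely the continuity of $\zeta_0$ on the open cylinder around $[0,T]\times\{0_{\R^2}\}$ that you cite, and your check via \ref{hyp:A2}--\ref{hyp:A3} is the paper's observation $\Null=\zeta_0([0,T]\times\{0_{\R^2}\})$ made just before the proposition.
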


This fact is essential for a rotational approach, since $\Null$ is the set of initial points for which no notion of rotation can be properly defined. 

Proposition~\ref{prop:Dcompact} has a second key consequence, namely that the rotation is finite on $\DD_\delta\setminus\Null$, since there is no blow-up solution in $[0,T]$ for such initial points. In order to fully exploit this property, we need the following lemma.

\begin{lemma}
	\label{lem:Ddelta_chain}
	Suppose that \ref{hyp:A1}, \ref{hyp:A2}, and \ref{hyp:A3} hold. Then, for every $\delta_1,\delta_2$ satisfying $0\leq \delta_1<\delta_2<r^*$,  we have  $\DD_{\delta_1}\subseteq \interior\DD_{\delta_2}$.
\end{lemma}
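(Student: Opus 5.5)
Take $z\in\DD_{\delta_1}$. By definition there is $\bar t\in[0,T]$ with $\bar z\coloneqq\zeta(\bar t;0,z)\in B[0,\delta_1]$, so $\abs{\bar z}\le\delta_1<\delta_2$. We will show that every point in a small ball around $z$ also lies in $\DD_{\delta_2}$.

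First we use Proposition~\ref{prop:Dcompact}. Since $\abs{\bar z}<r^*$, the solution $\zeta(t;\bar t,\bar z)$ extends to the whole interval $[0,T]$. By uniqueness \ref{hyp:A2}, this solution coincides with $\zeta(t;0,z)$. Hence the solution of the initial value problem \eqref{eq:cauchyIVP} with $z_0=z$ exists on $[0,T]$, and in particular on $[0,\bar t]$, and it is unique there.

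Next we apply Theorem~\ref{th:continuous_dependence} to the Cauchy problem with data $(0,z)$ on the interval $[0,T]$. It gives $\eta>0$ such that, for every $\hat z\in B(z,\eta)$, the solution $\zeta(t;0,\hat z)$ exists on $[0,T]$. It also gives continuity of $(s,\hat z)\mapsto\zeta(s;0,\hat z)$ at the point $(\bar t,z)$. Shrinking $\eta$ if needed, we get $\abs{\zeta(\bar t;0,\hat z)-\bar z}<\delta_2-\delta_1$ for all $\hat z\in B(z,\eta)$. Therefore
\[
\abs{\zeta(\bar t;0,\hat z)}<\delta_1+(\delta_2-\delta_1)=\delta_2 ,
\]
so $\hat z\in\DD_{\delta_2}$. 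This shows $B(z,\eta)\subseteq\DD_{\delta_2}$, that is, $z\in\interior\DD_{\delta_2}$.

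The only delicate point is that Theorem~\ref{th:continuous_dependence} requires the reference solution to exist on the relevant interval. This holds because the solution through $z$ passes through $B(0,r^*)$ at time $\bar t$, where Proposition~\ref{prop:Dcompact} guarantees continuation over $[0,T]$. An equivalent route is to use continuity of $\zeta_0$ on $(t_*^-,t_*^+)\times B(0,r^*)$, as established in the proof of Proposition~\ref{prop:Dcompact}, but the direct argument above avoids inverting $\zeta_0$.

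The inclusion is strict in the sense needed: the margin $\delta_2-\delta_1>0$ is exactly what absorbs the perturbation. Note that only the single time $\bar t$ is used, with no uniformity in $t$ required.
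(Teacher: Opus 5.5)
Your proof is correct and follows the paper's argument: fix $\bar t$ with $\abs{\zeta(\bar t;0,z)}\le\delta_1$, then use continuous dependence at the single time $\bar t$ with margin $\delta_2-\delta_1$ to get a ball around $z$ inside $\DD_{\delta_2}$. Your detour through Proposition~\ref{prop:Dcompact} to get existence on all of $[0,T]$ is harmless but unnecessary: $z\in\DD_{\delta_1}$ already means the solution exists on $[0,\bar t]$, and that is enough to apply Theorem~\ref{th:continuous_dependence}.
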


\begin{proof}It is sufficient to prove that for every $z_0\in \DD_{\delta_1}$ there exists a radius $r_0>0$ such that $B(z_0,r_0)\subseteq \DD_{\delta_2}$. 
	
For every $z_0\in \DD_{\delta_1}$, there exists $\bar t\in[0,T]$ such that $\zeta(\bar t;0,z_0)=\bar z\in B[0,\delta_1]$. By continuous dependence on initial data, there exists $r_0 > 0$ such that, for every $z\in \R^2$ with $\abs{z - z_0} < r_0$, we have
\begin{equation*}
	\abs{\zeta(\bar{t};0, z) - \zeta(\bar{t};0, z_0)} < \delta_2-\delta_1 \,.
\end{equation*}
Hence,
\begin{equation*}
	\abs{\zeta(\bar{t};0, z)} \leq \abs{\zeta(\bar{t};0, z_0)} + \abs{\zeta(\bar{t};0, z) - \zeta(\bar{t};0, z_0)}
	< \delta_1 + (\delta_2-\delta_1) =\delta_2 \,.
\end{equation*}
Therefore $B(z_0, r_0)\subseteq \DD_{\delta_2}$, concluding the proof.
\end{proof}

\subsection{Rotational properties}

\begin{lemma}[Lower estimate on rotation]
    \label{lem:rotation_lowerbound}
    Assume \ref{hyp:A1}, \ref{hyp:A2}, \ref{hyp:A3}, \ref{hyp:A6} hold, and take any $\delta$ with $0<\delta<r^*$. There exists an integrable non-negative function $a_\delta\colon[0,T]\to \R$ such that for every $z_{0}\in \R^{2}\setminus \DD_{\delta}$, if the solution $\zeta(t;0,z_{0})$ of \eqref{eq:cauchyIVP} exists on $[0,T]$, then
     \begin{equation}
    	\label{eq:rotation_lowerbound}
    	\rot_t(z_{0})\geq\rot_s(z_{0})-\int_s^t a_\delta(\tau)\dd \tau\qquad \text{for every  $0\leq s\leq t\leq T$.}
    \end{equation}
\end{lemma}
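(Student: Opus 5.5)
The plan is to bound the angular velocity from below along any solution that stays outside $B[0,\delta]$, and then integrate this bound.

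If $z_0\notin\DD_\delta$, then by definition \eqref{eq:def_D_delta} we have $\abs{\zeta(\tau;0,z_0)}>\delta$ for every $\tau\in[0,T]$. Hence it suffices to produce an integrable $a_\delta\geq 0$ such that
\begin{equation*}
\frac{\pscl{F(\tau,z)}{Jz}}{2\pi\abs{z}^2}\geq -a_\delta(\tau)\qquad\text{for a.e. }\tau\in[0,T]\text{ and every }\abs{z}>\delta .
\end{equation*}
Given this bound, we use that the solution exists on $[0,T]$ and never meets the origin, so $\rot_t(z_0)$ is given by formula \eqref{eq:rot_index}. For $s\leq t$ we then write
\begin{equation*}
\rot_t(z_0)-\rot_s(z_0)=\frac{1}{2\pi}\int_s^t\frac{\pscl{F(\tau,\phi(\tau,z_0))}{J\phi(\tau,z_0)}}{\abs{\phi(\tau,z_0)}^2}\dd\tau\geq-\int_s^t a_\delta(\tau)\dd\tau ,
\end{equation*}
which is exactly \eqref{eq:rotation_lowerbound}.

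To build $a_\delta$ we split the region $\abs{z}>\delta$ into a far part and a bounded annulus.

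\emph{Far region.} By \ref{hyp:A6}, applied uniformly in $t$, there exists $R>\delta$ such that
\begin{equation*}
\pscl{F(t,z)}{Jz}/\abs{z}^2\geq \ell-1\quad\text{for a.e. }t\text{ and every }\abs{z}\geq R .
\end{equation*}
One has to make sure the exceptional null set of times can be chosen independently of $z$. This is the one mildly delicate point. The natural reading of "uniformly for a.e. $t$" is that there is a single null set $N$ such that the $\liminf$ is reached uniformly for $t\notin N$, and I would state this explicitly.

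\emph{Bounded annulus.} On $\delta<\abs{z}\leq R$, the set $K=B[0,R]$ is compact, so the Carathéodory conditions in \ref{hyp:A1} give $m_K\in L^1([0,T])$ with $\abs{F(t,z)}\leq m_K(t)$ for a.e. $t$. By Cauchy--Schwarz, and since $\abs{Jz}=\abs{z}$,
\begin{equation*}
\frac{\abs{\pscl{F(t,z)}{Jz}}}{\abs{z}^2}\leq\frac{m_K(t)}{\abs{z}}\leq \frac{m_K(t)}{\delta}.
\end{equation*}
We then set
\begin{equation*}
a_\delta(t)\coloneqq\frac{1}{2\pi}\Bigl(\frac{m_K(t)}{\delta}+\abs{\ell-1}\Bigr),
\end{equation*}
which is non-negative and integrable on $[0,T]$. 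The lower bound holds on both regions: on the far region because $\ell-1\geq-\abs{\ell-1}$, and on the annulus by the estimate above. The a.e. issue needs care again here, because the null set from the Carathéodory bound must also be independent of $z\in K$. This is part of the definition of the condition as stated ("for a.e. $t$ and every $z\in K$"), so it is fine.

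Finally, \ref{hyp:A3} and $r^*$ enter only through the definition of $\DD_\delta$, which requires $\delta<r^*$. Everything else in the argument is routine.
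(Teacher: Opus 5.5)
Your proposal is correct and follows the paper's proof essentially step by step. Both use a uniform bound $\ell-1$ from \ref{hyp:A6} for $\abs{z}\geq R$, an integrable Carath\'eodory bound on the compact annulus $\delta\leq\abs{z}\leq R$, and integration of the resulting pointwise estimate along the solution via \eqref{eq:rot_index}; the only difference is your $a_\delta=\frac{1}{2\pi}\bigl(m_K/\delta+\abs{\ell-1}\bigr)$ in place of the paper's $\frac{1}{2\pi}\max\{1-\ell,m\}$.

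One small caution: in the main application (the Ambrosetti--Rabinowitz case) $\ell=+\infty$. You should therefore first replace $\ell$ by a finite lower bound such as $\min\{\ell,0\}$, since otherwise your $\abs{\ell-1}$ term makes $a_\delta$ non-integrable.
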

\begin{proof}
    By \ref{hyp:A6}, there exists $R>\delta$ such that 
    \begin{equation*}
    \frac{\pscl{F(t,z)}{Jz}}{\abs{z}^{2}}\geq \ell-1\qquad \text{for a.e. $t\in[0,T]$ and every $\abs{z}\geq R$.}
    \end{equation*}
  Moreover, the set
    $[0,T]\times \bigl( B[0,R]\setminus B(0,\delta) \bigr)$
is compact. Hence, by \ref{hyp:A1} there exists a non-negative integrable function $m\in L^1([0,T])$ such that
\begin{equation*}
\frac{\pscl{F(t,z)}{Jz}}{\abs{z}^{2}}\geq-m(t) \qquad \text{for a.e. $t\in[0,T]$ and every $z\in B[0,R]\setminus B(0,\delta)$.}
\end{equation*}
 We set $a_\delta(t)\coloneqq \frac{1}{2\pi} \max\{1-\ell,m(t)\}\geq0$  and recall that, since $z_{0}\in \R^{2}\setminus \DD_{\delta}$, we have $\abs{\zeta(t;0,z_{0})}>\delta$ for every $t\in[0,T]$. This gives the estimate
    \begin{equation}\label{eq:local_rotation_estimate}
	\frac{\pscl{F(t,\zeta(t;0,z_{0}))}{J\zeta(t;0,z_{0})}}{\abs{\zeta(t;0,z_{0})}^{2}} \geq - 2\pi a_\delta(t)\qquad \text{for a.e. $t\in[0,T]$.}
\end{equation}
In light of \eqref{eq:rot_index}, integrating the estimate \eqref{eq:local_rotation_estimate} over the interval $[s,t]$ yields \eqref{eq:rotation_lowerbound}.
\end{proof}

Thanks to this lemma, we can prove the continuity of the generalized rotation $\GRot_t(z_{0})$, justifying its definition given in \eqref{eq:extended_rotat_index}.

\begin{lemma}
    \label{lem:continuity_GRot}
     Suppose that the five assumptions \ref{hyp:A1}, \ref{hyp:A2}, \ref{hyp:A3}, \ref{hyp:A4} and \ref{hyp:A6} hold. Then, for every $0<\delta<r^*$, the function $\GRot_t(z_0)$ is continuous with respect to $(t,z_0)$  on $[0,T]\times\bigl(\R^{2}\setminus\DD_{\delta}\bigr)$.
\end{lemma}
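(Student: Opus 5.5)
Fix $0<\delta<r^*$ and a point $(\bar t,\bar z_0)\in[0,T]\times(\R^2\setminus\DD_\delta)$. Since $\DD_\delta\supseteq\Null$, every such $z_0$ has an orbit avoiding the origin on its interval of existence within $[0,T]$, so $\GRot_t(z_0)$ is defined. We split into two cases according to whether $\rot_{\bar t}(\bar z_0)$ is well defined (the solution exists on $[0,\bar t]$) or $\GRot_{\bar t}(\bar z_0)=+\infty$ (the solution blows up at some $T_\mathrm{F}\le\bar t$). Note that $\R^2\setminus\DD_\delta$ need not be open, so continuity is meant relative to this set. However, by Lemma~\ref{lem:Ddelta_chain} every point of $\R^2\setminus\DD_\delta$ lies outside $\DD_{\delta'}$ for $\delta'<\delta$, and $\R^2\setminus\DD_{\delta'}$ is an open neighbourhood of it. Nearby points therefore have orbits staying at distance at least $\delta'>0$ from the origin.

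\emph{Finite case.} If the solution from $\bar z_0$ exists on $[0,\bar t]$, then by the openness of the maximal existence interval it exists on $[0,\bar t+\epsilon]\cap[0,T]$. By Theorem~\ref{th:continuous_dependence}, for $z_0$ close to $\bar z_0$ the solutions exist on the same interval and converge uniformly to $\zeta(\cdot;0,\bar z_0)$. Since the limit orbit stays away from the origin on this compact interval, so do nearby orbits. The uniform convergence then passes to the angular lift $\Phi^\theta$: one can either use the lift continuity from \cite[Theorem~11]{Gid23}, or argue directly that uniformly close curves avoiding $0$ have lifts that are uniformly close once their initial angles are chosen close. Hence $\rot_t(z_0)\to\rot_{\bar t}(\bar z_0)$ as $(t,z_0)\to(\bar t,\bar z_0)$, using also the continuity in $t$ of $\rot_t(\bar z_0)$.

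\emph{Blow-up case (main obstacle).} Here we must show $\GRot_t(z_0)\to+\infty$. Given $M>0$, use \ref{hyp:A4} to choose $s<T_\mathrm{F}$ with $\rot_s(\bar z_0)>M+1+\int_0^T a_{\delta'}$, where $a_{\delta'}$ is the function from Lemma~\ref{lem:rotation_lowerbound}. By the finite case applied at time $s$, $\rot_s(z_0)>M+\int_0^T a_{\delta'}$ for $z_0$ near $\bar z_0$. Also, $t$ near $\bar t$ satisfies $t>s$, since $s<T_\mathrm{F}\le\bar t$. For such $(t,z_0)$ there are two possibilities.
\begin{itemize}
\item If the solution from $z_0$ blows up before $t$, then $\GRot_t(z_0)=+\infty>M$ directly.
\item Otherwise it exists on $[0,t]$. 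We apply the estimate \eqref{eq:rotation_lowerbound} from Lemma~\ref{lem:rotation_lowerbound} on $[s,t]$ and obtain $\rot_t(z_0)\ge\rot_s(z_0)-\int_s^t a_{\delta'}>M$.
\end{itemize}

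The subtlety in the second possibility is that Lemma~\ref{lem:rotation_lowerbound} is stated for solutions existing on all of $[0,T]$. Its proof, however, is a pointwise integration of \eqref{eq:local_rotation_estimate}. That estimate holds on any interval where the solution exists and stays outside $B[0,\delta']$, and the latter holds because $z_0\notin\DD_{\delta'}$. So we will observe that the lemma extends verbatim to $0\le s\le t$ inside the existence interval.

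Combining the two cases yields continuity of $\GRot$ into $\R\cup\{+\infty\}$ with its natural topology. Hypothesis \ref{hyp:A5} is not needed for this lemma.
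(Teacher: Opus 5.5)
Your proof is correct and follows essentially the same route as the paper: continuous dependence handles the finite case, and in the blow-up case you use \ref{hyp:A4} to get rotation above $M+\int_0^T a_{\delta'}$ at some time $s<T_\mathrm{F}$, spread this to a neighbourhood via the finite case, and then bound the later unwinding with Lemma~\ref{lem:rotation_lowerbound}. Your extra care fills in points the paper's proof leaves implicit: you work on the open set $\R^2\setminus\DD_{\delta'}$ with $\delta'<\delta$, you note that the lower-bound estimate only needs existence on $[s,t]$, and you observe that nearby solutions blowing up before $t$ give $+\infty$ directly.
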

\begin{proof}
	Take any $(\hat t, z_0)\in [0,T]\times\bigl(\R^{2}\setminus\DD_{\delta}\bigr)$. We distinguish between two cases. 
	
			If the solution of the initial value problem \eqref{eq:cauchyIVP} exists on $[0,\hat t]$, then $\zeta(\hat t;0,z_0)$ is well-defined and finite. Thus, by the definition of $\DD_\delta$ and Theorem~\ref{th:continuous_dependence}, the rotation $\GRot_{\hat t}(z_{0})$ is well-defined, finite and continuous at $(\hat t,z_0)$.
	
Let us now instead assume that the solution of the initial value problem \eqref{eq:cauchyIVP} blows up at a finite time $T_\mathrm{F}\in(0,T]$ and consider $\hat t\in[T_\mathrm{F},T]$. By \ref{hyp:A4} we have $\GRot_{\hat t}(z_{0})=+\infty$. Therefore, we must show that, for every $k\in\N$, there exist a neighbourhood $U_{k}\subseteq\R^2\setminus\DD_\delta$ of $z_0$ and a time $t_k\in[0,\hat t)$ such that 
    \begin{equation}
        \GRot_t(z)>k \qquad \text{for every $(t,z)\in [t_{k},T]\times U_{k}$\,.}
    \end{equation}
    Let $\Lambda\coloneqq \int_0^T a_\delta(t)\dd t$, with $a_\delta$ from Lemma~\ref{lem:rotation_lowerbound}. By \ref{hyp:A4}, for every $k>0$ there exists $t_{k}<T_\mathrm{F}$ such that $\GRot_{t_{k}}(z_{0})>k+\Lambda$.  Since $t_{k}<T_\mathrm{F}$, $\GRot_{t_{k}}(z_{0})<+\infty$ and, by the previous case, there exists a neighbourhood $U_{k}\subseteq\R^2\setminus\DD_\delta$ of $z_{0}$ such that 
    \begin{equation}
    	\label{eq:blowup_rotation_estimate}
        \GRot_{t_{k}}(z)>k+\Lambda \qquad \text{for every $z\in U_{k}$}\,.
    \end{equation}
    Finally, by Lemma~\ref{lem:rotation_lowerbound} and \eqref{eq:blowup_rotation_estimate},  we obtain
    \begin{equation*}
    \GRot_{t}(z)\geq \GRot_{t_{k}}(z)-\int^t_{t_{k}}a_\delta(s)\dd s>k \qquad \text{for every $t\in(t_k,T], z\in U_{k}$\,,}
    \end{equation*}
    which concludes the proof.
\end{proof}
Lemma~\ref{lem:continuity_GRot} implies straightforwardly that $\GRot_t(z_0)$ is continuous with respect to $(t,z_0)$  on its whole domain $[0,T]\times\bigl(\R^{2}\setminus\Null\bigr)$.

\subsection{Proof of Theorem \ref{thm:main_theorem}} \label{subsec:mainproof}
    
   First of all, observe that a  $T$-periodic solution for \eqref{eq:planar} corresponds to a fixed point of the map $\phi(T,\cdot)$ introduced in \eqref{eq:def_phi}. To find such a fixed point, we plan to apply Theorem~\ref{thm:fixed_point} on a suitably constructed set $U$.

   Let us take any $\delta\in(0,r^*)$. By Proposition~\ref{prop:Dcompact}, $\phi(T,\cdot)$ is well-defined on $\DD_\delta$ and by Lemma~\ref{lem:Ddelta_chain} we have
    \begin{equation*}
   	\{0_{\R^2}\}\subseteq \Null\subset \interior \DD_\delta \subset \DD_\delta \,.
   \end{equation*}
This implies that the rotation number $\rot_T(z_0)$ is well defined and finite for all $z_0\in \partial \DD_\delta$.
Thanks to the compactness of $\partial \DD_\delta$ (Proposition~\ref{prop:Dcompact}) and the continuity of $\rot_T(z_0)$ on that set (Lemma~\ref{lem:continuity_GRot}), there exists an integer $\bar{n}\in\N$ such that 
\begin{equation}
	\label{eq:bound_from_above_rot_on_U1}
	\rot_T(z_0)< \bar{n} \qquad \text{for every $z_0\in\partial\DD_\delta$\,.}
\end{equation}

Consider the set 
\begin{equation*}
	W=\left\{z_0\in\R^{2} \setminus\Null:\;\GRot_T(z_0)<\bar{n}+\tfrac{1}{2}\right\}.
\end{equation*}
The set $W$ is open (in $\R^2\setminus\Null$, hence also in $\R^2$) since it is the preimage of an open set. Moreover, by assumption \ref{hyp:A5},  $W$ is bounded. Therefore, the set
\begin{equation}
	\label{eq:new_u_for_main_thm}
	U\coloneqq\interior \DD_\delta \cup W
\end{equation}
is open and bounded, being the union of two open bounded sets. Note that $\Null\cap\partial U=\emptyset$, since $\Null\subset \interior \DD_\delta$ and $\interior \DD_\delta$ is open. By the continuity of $\GRot_T(z_0)$, since by construction $\partial \DD_{\delta}\subset \interior W$, it follows that 
\begin{equation*}
	\GRot_T(\partial U)=\left\{\bar{n}+\tfrac{1}{2}\right\}.
\end{equation*}
In particular, $\GRot_{T}(\partial U)\cap\Z=\emptyset$. Hence, all the assumptions of Theorem~\ref{thm:fixed_point} are satisfied, which gives the existence of a fixed point for $\phi(T,\cdot)$ in $U$, proving Theorem~\ref{thm:main_theorem}.\hfill\qed

\section{The Hamiltonian case}
\label{section:ham-case}

We now discuss our main result in the special case of planar Hamiltonian systems \eqref{eq:HS}. As is customary, to ensure adequate rotational properties at infinity, we will assume the Ambrosetti--Rabinowitz condition \eqref{eq:Ambrosetti-Rabinowitz}. If we were assuming global existence of solutions, so that $\rot_T$ would be well-defined everywhere on $
\R^2\setminus\Null$, the Ambrosetti--Rabinowitz condition \eqref{eq:Ambrosetti-Rabinowitz} would be sufficient for $\rot_T(z_0)$ to be continuous and tend to $+\infty$ as $\abs{z_0}\to+\infty$, leading easily to the existence of a $T$-periodic solution, since the argument of Section \ref{subsec:mainproof} could be directly applied. 
As we will show, this is no longer generally true if global existence of solutions is dropped.

To proceed, let us first set our regularity assumptions on \eqref{eq:HS}:
\begin{enumerate}[label=\textup{(H0)}]
	\item The Hamiltonian function $H(t,z)$ is continuously differentiable in $z$ and \\ \mbox{$F(t,z)\coloneqq J\nabla_{z}H(t,z)$} satisfies conditions \ref{hyp:A1}, \ref{hyp:A2} and \ref{hyp:A3}. 
	\label{hyp:H0}
	\end{enumerate}
Under such a condition, we will be able to focus on what is needed for the rotational properties \ref{hyp:A4}, \ref{hyp:A5} and \ref{hyp:A6} to hold.

To make our exposition easier to follow, let us formulate our discussion in the clockwise \emph{canonical polar coordinates} $(\theta,\rho)$.  
Precisely, given the Hamiltonian function $H(x,y)$, we set
\begin{equation}
	\label{eq:canonical-polar-coordinates}
	(x,y)=(\sqrt{2\rho}\cos\theta,-\sqrt{2\rho}\sin\theta)\,,
\end{equation}
with \emph{clockwise} orientation.

In these coordinates, the Hamiltonian system takes the form
\begin{equation}
	\label{eq:ham-system-canonical-polar-coordinates}
	\begin{cases}		
    \dot{\theta}=\dfrac{\partial H(t,\theta,\rho)}{\partial \rho}\,,\\[10pt]
	\dot{\rho}=-\dfrac{\partial H(t,\theta,\rho)}{\partial \theta}\,.
	\end{cases}
\end{equation}
In the following, we omit the explicit dependence of $x,y$ on $(\theta,\rho)$. Notice in particular that $\abs{z}=\sqrt{2\rho}$.
Within this framework,  we consider the following assumptions.
\begin{enumerate}[label=\textup{(H\arabic*)}]
\item \label{hyp:H1} There exist a large enough radius $\rr >0$ and two constants $c>0$, $\gamma \geq 1$ such that for a.e.~$t\in[0,T]$ and every $\theta\in[0,2\pi)$, $\rho\geq \rr$ it holds
\begin{equation*}
    \frac{\partial H(t,\theta,\rho)}{\partial \rho}\geq c\abs{\frac{\partial H(t,\theta,\rho)}{\partial \theta}}^{\gamma}\,.
\end{equation*}

\item \label{hyp:H2} There exist a large enough radius $\rr>0$, and two constants $k\in(0,1/2)$, $m>0$ such that for a.e.~$t\in[0,T]$ and every $\theta\in[0,2\pi)$, $\rho\geq \rr$ it holds
\begin{equation*}
   0< m\leq H(t,\theta,\rho)\leq 2k\rho \,\frac{\partial H(t,\theta,\rho)}{\partial \rho}\,.
\end{equation*}
\end{enumerate}

\begin{remark}
    Assumption \ref{hyp:H1} can be restated more compactly as $\dot{\theta}\geq c\abs{\dot{\rho}}^{\gamma}$ for $\rho>\rr$. Equivalently, in Cartesian coordinates it corresponds to
    \begin{equation*}
    	\frac{\pscl{\dot z}{Jz}}{\abs{z}^{2}}\geq C\abs{\pscl{\dot{z}}{z}}^{\gamma} \qquad \text{for $\abs{z}>R\coloneqq\sqrt{2\rr}$}\,,
    \end{equation*}
    since we recall that $\dot{\theta}=\frac{\pscl{\dot z}{Jz}}{\abs{z}^{2}}$ and $\dot{\rho}=\pscl{\dot{z}}{z}$.
    
    Assumption \ref{hyp:H2} is exactly the Ambrosetti--Rabinowitz condition \eqref{eq:Ambrosetti-Rabinowitz}, rewritten in polar coordinates. Usually, the lower bound appears as $0<H(t,z)$: this is trivially equivalent to our formulation if $H$ is continuous, as is often assumed in literature (while in \ref{hyp:H0} we assume only Carathéodory regularity). Indeed, first applications of this condition to Hamiltonian systems were in an autonomous setting \cite{BaBe,Rab78}. The existence of such an $m$ ensures a superquadratic growth $H(t,z)\geq a  \abs{z}^{1/k}$, for every $\abs{z}>R$, with a coefficient $a$ uniform in time (cf.~Lemma~\ref{lem:growth_H_under_Ambrosetti-Rabinowitz}), so we consider our formulation of \ref{hyp:H2} a natural extension of the classical condition to the Carathéodory setting.
\end{remark}

\begin{theorem}
	\label{thm:main_hamiltonian}
	Suppose that \ref{hyp:H0}, \ref{hyp:H1} and \ref{hyp:H2} hold. Then the planar Hamiltonian system \eqref{eq:HS} admits at least one $T$-periodic solution.
\end{theorem}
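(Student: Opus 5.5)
The plan is to reduce the statement to Theorem~\ref{thm:main_theorem}. Since \ref{hyp:H0} already gives \ref{hyp:A1}--\ref{hyp:A3} for $F=J\nabla_z H$, it is enough to show that \ref{hyp:H1}--\ref{hyp:H2} imply \ref{hyp:A4}, \ref{hyp:A5} and \ref{hyp:A6}. We work throughout in the canonical polar coordinates \eqref{eq:ham-system-canonical-polar-coordinates}, where the angular velocity is $\dot\theta=\partial_\rho H$ and $\rho=\abs{z}^2/2$.

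\textbf{Step 1: \ref{hyp:A6}.} By \ref{hyp:H2}, for $\rho\geq\rr$ we have $\partial_\rho H\geq H/(2k\rho)\geq m/(2k\rho)>0$. Since $\pscl{F(t,z)}{Jz}/\abs{z}^2=\dot\theta=\partial_\rho H$, the liminf in \ref{hyp:A6} is at least $0$. This also shows that $t\mapsto\rot_t(z_0)$ is increasing while $\abs{z(t)}>R$.

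\textbf{Step 2: growth of $H$.} Along the ray $\rho\mapsto H(t,\theta,\rho)$, \ref{hyp:H2} gives $\partial_\rho \log H\geq 1/(2k\rho)$. Integrating yields $H(t,\theta,\rho)\geq m(\rho/\rr)^{1/(2k)}$, i.e. $H\geq a\abs{z}^{1/k}$ with exponent $1/(2k)>1$ in $\rho$. This is the content of Lemma~\ref{lem:growth_H_under_Ambrosetti-Rabinowitz}. Consequently
\[
\dot\theta\geq \frac{H}{2k\rho}\geq a'\rho^{\frac{1}{2k}-1}\longrightarrow+\infty .
\]

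\textbf{Step 3: \ref{hyp:A4}.} This is where \ref{hyp:H1} enters. Suppose a solution blows up at $T_\mathrm{F}\leq T$. Then $\rho(t)\to+\infty$, so there is a last time $t_0$ after which $\rho>\rr$. For $t\in(t_0,T_\mathrm{F})$, \ref{hyp:H1} gives $\abs{\dot\rho}\leq (\dot\theta/c)^{1/\gamma}$, hence
\[
\rho(t)-\rho(t_0)\leq \int_{t_0}^t c^{-1/\gamma}\dot\theta^{1/\gamma}\dd s\leq c^{-1/\gamma}(t-t_0)^{1-1/\gamma}\bigl(\theta(t)-\theta(t_0)\bigr)^{1/\gamma}
\]
by Hölder's inequality (for $\gamma=1$ the bound is direct). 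The left side tends to $+\infty$ while $t-t_0\leq T$, so $\theta(t)\to+\infty$, which is exactly \ref{hyp:A4}.

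\textbf{Step 4: \ref{hyp:A5}.} This is the main obstacle. Take $\abs{z_0}$ large. If the solution blows up in $[0,T]$, Step 3 gives $\GRot_T=+\infty$ provided it never enters $\rho\leq\rr$. So it suffices to show that for $\rho(0)\gg\rr$ the solution stays in $\{\rho>\rho_1\}$ long enough to rotate many times. Descending from $\rho(0)$ to a level $\rho_1$ requires, by the Hölder estimate of Step 3, an angular change of at least $c(\rho(0)-\rho_1)^\gamma T^{1-\gamma}$, which is large. In either case the rotation is large: if the solution stays above $\rho_1$ for all of $[0,T]$, the lower bound $\dot\theta\geq a'\rho_1^{1/(2k)-1}$ integrated over $[0,T]$ gives rotation at least $Ta'\rho_1^{1/(2k)-1}/(2\pi)$; if it reaches $\rho_1$ before time $T$, the Hölder bound already gives rotation at least $c(\rho(0)-\rho_1)^\gamma T^{1-\gamma}/(2\pi)$ by that time. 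Rotation accumulated later can decrease only while $\rho<\rr$, and by Lemma~\ref{lem:rotation_lowerbound} the total loss is at most $\Lambda$, provided the solution avoids $\DD_\delta$. That proviso holds because $\DD_\delta$ is compact by Proposition~\ref{prop:Dcompact}, so large $z_0$ lie outside it. Choosing $\rho_1$ large and then $\rho(0)\gg\rho_1$ gives $\GRot_T(z_0)\to+\infty$.

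With \ref{hyp:A1}--\ref{hyp:A6} all verified, Theorem~\ref{thm:main_theorem} yields the $T$-periodic solution.
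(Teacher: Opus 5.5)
Your proposal is correct and follows essentially the same route as the paper. It reduces to Theorem~\ref{thm:main_theorem}, obtains \ref{hyp:A6} from \ref{hyp:H2}, obtains \ref{hyp:A4} from \ref{hyp:H1} via a Jensen/Hölder bound on $\int\abs{\dot\rho}$, and obtains \ref{hyp:A5} by a case split (blow-up; staying above a large level, as in Lemma~\ref{lem:rotation_large}; descending to that level, then losing at most $\Lambda$ by Lemma~\ref{lem:rotation_lowerbound}). The proviso in your blow-up case of Step~4 is unnecessary, since $\GRot_T=+\infty$ there by definition.
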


\subsection{Proof of Theorem~\ref{thm:main_hamiltonian}}
Our plan is to prove that under such assumptions conditions \ref{hyp:A4}, \ref{hyp:A5} and \ref{hyp:A6} are satisfied, hence Theorem~\ref{thm:main_theorem} can be applied. We do this through several lemmas.

The proof of \ref{hyp:A6} is a straightforward consequence of \ref{hyp:H2}.
\begin{lemma}
	\label{lem:H2impliesA6}
	If \ref{hyp:H2} holds, then \ref{hyp:A6} holds with $\ell\geq 0$.
\end{lemma}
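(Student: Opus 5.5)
The plan is to rewrite the angular velocity appearing in \ref{hyp:A6} in the canonical polar coordinates \eqref{eq:canonical-polar-coordinates} and then read off its sign from \ref{hyp:H2}. By the Remark following \ref{hyp:H2}, along any vector field $F=J\nabla_z H$ we have
\begin{equation*}
	\frac{\pscl{F(t,z)}{Jz}}{\abs{z}^{2}}=\dot\theta=\frac{\partial H(t,\theta,\rho)}{\partial \rho}\,,
\end{equation*}
where $(\theta,\rho)$ are the canonical polar coordinates of $z$ and $\abs{z}=\sqrt{2\rho}$. I would verify this identity once directly, by the chain rule applied to \eqref{eq:canonical-polar-coordinates}, which gives $\partial_\rho H=\pscl{\nabla_z H}{z}/\abs{z}^2$. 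Together with $\pscl{J\nabla_z H}{Jz}=\pscl{\nabla_z H}{z}$, which holds because $J$ is orthogonal, this yields the displayed equality.

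The key step is then immediate. For a.e.\ $t\in[0,T]$ and every $z$ with $\abs{z}\geq\sqrt{2\rr}$, i.e.\ $\rho\geq\rr$, assumption \ref{hyp:H2} gives
\begin{equation*}
	2k\rho\,\frac{\partial H}{\partial\rho}\geq H(t,\theta,\rho)\geq m>0\,,
\end{equation*}
and since $k>0$ and $\rho>0$ this forces $\partial_\rho H>0$. Hence $\pscl{F(t,z)}{Jz}/\abs{z}^2\geq 0$ (indeed $\geq m/(2k\rho)$) uniformly for a.e.\ $t$ and all $\abs{z}\geq\sqrt{2\rr}$. Taking the $\liminf$ as $\abs{z}\to+\infty$, we get $\ell\geq 0>-\infty$, uniformly in $t$, which is exactly \ref{hyp:A6}.

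There is no real obstacle here. The only point requiring care is the bookkeeping of the orientation: the coordinates are clockwise and $J$ is a clockwise $\pi/2$ rotation, so the sign of $\dot\theta$ must match the sign of $\pscl{F}{Jz}$. This is confirmed by the identity $\pscl{J\nabla H}{Jz}=\pscl{\nabla H}{z}$ and by the Cartesian form of \ref{hyp:H2}, namely $0<m\leq H<k\pscl{z}{\nabla_z H}$. From the Cartesian form one gets $\pscl{z}{\nabla_z H}>0$ directly, so the argument can even be carried out without polar coordinates.
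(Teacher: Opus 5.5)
Your proof is correct and is essentially the paper's argument. The paper works directly in Cartesian form, writing $\pscl{F(t,z)}{Jz}=\pscl{J\nabla_z H(t,z)}{Jz}=\pscl{\nabla_z H(t,z)}{z}\geq \frac{1}{k}H(t,z)>0$ for $\abs{z}>\sqrt{2\rr}$, which is exactly the shortcut in your last paragraph; your polar identity $\pscl{F}{Jz}/\abs{z}^2=\partial_\rho H$ is the same inequality divided by $\abs{z}^2=2\rho$.
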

\begin{proof}
	Since  \ref{hyp:H2} is equivalent to \eqref{eq:Ambrosetti-Rabinowitz},  for every $\abs{z}>R\coloneqq\sqrt{2\rr}$ we have
\begin{equation*}
	\pscl{F(t,z)}{Jz}
	=\pscl{J\nabla_{z}H(t,z)}{Jz}
	=\pscl{\nabla_{z}H(t,z)}{z}
	\geq \frac{1}{k}H(t,z)>0\,.
\end{equation*}
\end{proof}
On the other hand, assumption \ref{hyp:H1} is pivotal to obtain \ref{hyp:A4}.
\begin{lemma}
    \label{lem:hs_blowup_rot}
     If \ref{hyp:H0} and \ref{hyp:H1} hold, then \ref{hyp:A4} holds.
\end{lemma}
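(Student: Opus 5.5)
Let $z_0$ be such that the solution $z(t)=\zeta(t;0,z_0)$ of \eqref{eq:cauchyIVP} blows up at $T_\mathrm{F}\in(0,T]$. We show that $\rot_t(z_0)\to+\infty$ as $t\to T_\mathrm{F}^-$.

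First, the solution cannot reach the origin, because of \ref{hyp:A3} and uniqueness \ref{hyp:A2}. Any solution through $0_{\R^2}$ at a time in $[0,T]$ is continuable to $[0,T]$, so a blow-up solution has $z(t)\neq0$ on $[0,T_\mathrm{F})$ and $\rot_t(z_0)$ is defined there.

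We work in the canonical polar coordinates $(\theta,\rho)$ of \eqref{eq:canonical-polar-coordinates}. Blow-up means $\rho(t)\to+\infty$ as $t\to T_\mathrm{F}^-$, since a solution staying bounded would be continuable. Hence there is $t_0<T_\mathrm{F}$ with $\rho(t)>\rr$ on $[t_0,T_\mathrm{F})$. We choose $t_0$ as the last time with $\rho(t_0)=\rr$, so that $\rho>\rr$ afterwards.

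On $[t_0,T_\mathrm{F})$ assumption \ref{hyp:H1} gives $\dot\theta\geq c\abs{\dot\rho}^{\gamma}\ge 0$ a.e. Thus the rotation is nondecreasing there, and
\begin{equation*}
2\pi\bigl(\rot_t(z_0)-\rot_{t_0}(z_0)\bigr)=\theta(t)-\theta(t_0)\geq c\int_{t_0}^{t}\abs{\dot\rho(s)}^{\gamma}\dd s.
\end{equation*}

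The key step is to bound this integral from below using the divergence of $\rho$. By Hölder's (Jensen's) inequality with $\gamma\geq1$ on an interval of length at most $T$,
\begin{equation*}
\int_{t_0}^{t}\abs{\dot\rho}^{\gamma}\dd s\geq (t-t_0)^{1-\gamma}\Bigl(\int_{t_0}^t\abs{\dot\rho}\dd s\Bigr)^{\gamma}\geq T^{1-\gamma}\bigl(\rho(t)-\rr\bigr)^{\gamma}.
\end{equation*}
The right-hand side tends to $+\infty$ as $t\to T_\mathrm{F}^-$. Hence $\rot_t(z_0)\geq \rot_{t_0}(z_0)+\frac{c}{2\pi}T^{1-\gamma}(\rho(t)-\rr)^\gamma\to+\infty$. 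Here $\rot_{t_0}(z_0)$ is finite because the solution exists and avoids the origin on $[0,t_0]$. This is exactly \ref{hyp:A4}.

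The main technical point to check is regularity. We need $\rho$ absolutely continuous on compact subintervals of $[t_0,T_\mathrm{F})$, so that $\int\abs{\dot\rho}\ge\rho(t)-\rho(t_0)$ holds. This follows because Carathéodory solutions are absolutely continuous. We also need \ref{hyp:H1} to hold along the trajectory for a.e. $t$ in this interval, which follows since it holds for a.e. $t$ uniformly in $(\theta,\rho)$.

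Two further points need a word. The case $\gamma=1$ is immediate. The choice of $t_0$ uses only the continuity of $\rho$ and the fact that $\rho\to\infty$.
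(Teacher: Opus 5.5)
Your proof is correct and follows essentially the same route as the paper: no origin crossing by \ref{hyp:A2}--\ref{hyp:A3}, a tail interval $[t_0,T_\mathrm{F})$ on which $\rho>\rr$, then \ref{hyp:H1} combined with Jensen's inequality and $\int_{t_0}^t\abs{\dot\rho}\,\dd s\geq\rho(t)-\rho(t_0)\to+\infty$. One small slip: if $\rho>\rr$ on all of $[0,T_\mathrm{F})$ there is no ``last time'' with $\rho=\rr$, so you should take $t_0=0$ and write $\rho(t)-\rho(t_0)$ in place of $\rho(t)-\rr$ (as the paper does); the conclusion is unchanged.
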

\begin{proof}Let $z(t)$ be the solution of an initial value problem
	    \begin{equation}\label{eq:HS_IVP}
		\begin{cases}
			\dot{z}(t)=J\nabla_zH(t,z)\\
			z(0)=z_0
		\end{cases}
	\end{equation} 
blowing up at a finite time $T_\mathrm{F}\in(0,T]$. By \ref{hyp:H0}, this solution never crosses the origin during $t\in[0,T_\mathrm{F})$, hence we can properly express it in canonical polar coordinates \eqref{eq:canonical-polar-coordinates} as $(\theta(t),\rho(t))$.
     
     Furthermore, there exists $\bar{t}\in[0,T_\mathrm{F})$ such that $\rho(t)>\rr$ for all $t\in[\bar{t},T_\mathrm{F})$.
     On this interval, assumption \ref{hyp:H1} gives
    \begin{equation*}
    \dot{\theta}(t)=\frac{\partial H}{\partial \rho}\geq c\abs{\dot{\rho}(t)}^{\gamma}\,,\qquad \gamma \geq 1\,.
    \end{equation*}
We recall that, since $u\mapsto u^{\gamma}$ with $\gamma \geq 1$ is convex, by Jensen's inequality we obtain
\begin{equation}
	\label{eq:Jensen-ineq}
	\frac{1}{b-a}\int_{a}^{b}g(s)^{\gamma}\dd s\geq \left(\frac{1}{b-a}\int_{a}^{b}g(s)\dd s\right)^{\gamma} 
\end{equation}
for every non-negative integrable function $g$ on $[a,b]$.
For $t\in(\bar t, T_\mathrm{F})$, it holds
\begin{align}
\frac{1}{2\pi}\int_{\bar{t}}^{t}\dot{\theta}(s)\dd s 
&\stackrel{\ref{hyp:H1}}{\geq} \frac{c}{2\pi}\int_{\bar{t}}^{t}\abs{\dot{\rho}(s)}^{\gamma} \dd s
\stackrel{\eqref{eq:Jensen-ineq}}{\geq} \frac{c}{2\pi\abs{t-\bar t}^{\gamma-1}}\left(\int_{\bar{t}}^{t}\abs{\dot{\rho}(s)} \dd s\right)^{\gamma} \notag\\
&\geq \frac{c}{2\pi\abs{T_\mathrm{F}-\bar t}^{\gamma-1}}\left(\int_{\bar{t}}^{t}\abs{\dot{\rho}(s)} \dd s\right)^{\gamma} \,.\label{eq:blow_up_norm_inequality}
\end{align}
Since the solution blows up, we have
\begin{equation}
	\label{eq:blow-up}
	\lim_{t \to T_\mathrm{F}} \int_{\bar{t}}^{t} \abs{\dot{\rho}(s)}\dd s
	\geq \lim_{t \to T_\mathrm{F}} \int_{\bar{t}}^{t} \dot{\rho}(s)\dd s
	= \lim_{t \to T_\mathrm{F}} \rho(t)-\rho(\bar{t})=+\infty\,.
\end{equation}
Combining \eqref{eq:blow_up_norm_inequality} with \eqref{eq:blow-up}, recalling that $\gamma \geq 1$, we get
    \begin{equation*}
        \lim_{t \to T_\mathrm{F}} \rot_t(z_0)= \frac{\theta(\bar t)-\theta(0)}{2\pi}
        + \lim_{t \to T_\mathrm{F}} \frac{1}{2\pi}\int_{\bar{t}}^{t}\dot{\theta}(s)\dd s =+\infty\,,
    \end{equation*}
proving \ref{hyp:A4}.
\end{proof}

The proof of \ref{hyp:A5} is more complex and requires two preliminary lemmas. 
To start, we recall that the  Ambrosetti--Rabinowitz condition \ref{hyp:H2} can be seen as superquadratic-growth condition for the Hamiltonian $H$.

\begin{lemma}[Superquadratic growth under~\ref{hyp:H2}]
	\label{lem:growth_H_under_Ambrosetti-Rabinowitz} 
	Let \ref{hyp:A1}, \ref{hyp:A2} and \ref{hyp:H2} hold. Then there exists $a>0$ such that
	\begin{equation*}
		H(t,z)\geq a  \abs{z}^{1/k}  \qquad\text{for every $\abs{z}>R$, a.e. $t\in[0,T]$}
	\end{equation*}
	where $k\in\left(0,\frac{1}{2}\right)$ and $R\coloneqq\sqrt{2\rr}>0$ are the constants from \ref{hyp:H2}.
\end{lemma}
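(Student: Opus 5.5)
The plan is to integrate the differential inequality in \ref{hyp:H2} along rays in the radial variable $\rho$, for fixed $t$ and $\theta$. Fix $t$ outside the exceptional null set and $\theta\in[0,2\pi)$. Set $h(\rho)\coloneqq H(t,\theta,\rho)$ for $\rho\geq\rr$. By \ref{hyp:H0} (or \ref{hyp:A1}--\ref{hyp:A2} together with the $\mathcal C^1$ dependence in $z$), $h$ is $\mathcal C^1$ in $\rho$, since $\rho\mapsto z$ is smooth for $\rho>0$. By \ref{hyp:H2}, $h\geq m>0$ and
\begin{equation*}
	h'(\rho)\geq \frac{h(\rho)}{2k\rho}\qquad\text{for every }\rho\geq\rr\,.
\end{equation*}

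Since $h>0$, I will divide by $h$ and obtain $\frac{\dd}{\dd\rho}\log h(\rho)\geq \frac{1}{2k}\frac{\dd}{\dd\rho}\log\rho$. Integrating from $\rr$ to $\rho$ then gives
\begin{equation*}
	h(\rho)\geq h(\rr)\left(\frac{\rho}{\rr}\right)^{1/(2k)}\geq \frac{m}{\rr^{1/(2k)}}\,\rho^{1/(2k)}\,.
\end{equation*}
Here I use the lower bound $h(\rr)\geq m$. This is exactly why the constant $m$ in \ref{hyp:H2} is uniform in $t$ and $\theta$: it makes the final constant uniform.

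Next I return to Cartesian coordinates through $\rho=\abs{z}^2/2$, which gives $\rho^{1/(2k)}=2^{-1/(2k)}\abs{z}^{1/k}$. This produces
\begin{equation*}
	H(t,z)\geq a\abs{z}^{1/k},\qquad a\coloneqq m\,(2\rr)^{-1/(2k)}=m\,R^{-1/k},
\end{equation*}
valid for every $\abs{z}\geq R=\sqrt{2\rr}$ and a.e.\ $t$. It holds in particular for $\abs{z}>R$.

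The only delicate point is the measure-theoretic bookkeeping. The inequality in \ref{hyp:H2} holds for a.e.\ $t$, and I need the exceptional null set to be independent of $(\theta,\rho)$. I will read \ref{hyp:H2} as stating exactly this: for a.e.\ $t$, the inequality holds for all $\theta$ and $\rho\geq\rr$. With that reading, the one-variable argument above applies for each such $t$ simultaneously in all directions, and no further difficulty arises. If instead the null set depended on $(\theta,\rho)$, I would use the continuity of $H$ and $\partial_\rho H$ in $z$ to pass from a countable dense set of points to all points.
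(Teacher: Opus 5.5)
Your proof is correct and follows the paper's argument. You divide the Ambrosetti--Rabinowitz inequality by $H>0$, integrate the logarithmic derivative along rays starting from the threshold radius, and use the uniform lower bound $m$ at that radius. Parametrizing by $\rho$ instead of $r=\abs{z}$ is equivalent, and your constant $a=mR^{-1/k}$, obtained from the pointwise bound $h(\rr)\geq m$, simply bypasses the paper's infimum over $t$ and its harmless case split on $R\leq 1$.
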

\begin{proof}
	Fix any $t\in[0,T]$ and set $z=r\xi$, where $\xi\in\R^{2}$ is a unit vector and $r> 0$. Define $h(t,r)=H(t,r\xi)$. Then
	\begin{equation*}
		\pscl{\nabla_{z}H(t,z)}{z}= r\,\frac{\partial}{\partial r}h(t,r)\,.
	\end{equation*}
	By \ref{hyp:H2}, we obtain
	\begin{equation*}
		0< h(t,r)\leq k\, r\frac{\partial}{\partial r}h(t,r) \qquad \text{for every $r>R$}\,.
	\end{equation*}
	Dividing the equation by $kr h(t,r)>0$ and then taking the definite integral from $R$ to any $ r>R$  yields
		\begin{equation}
        \label{eq:ineq-h(t,r)}
0<\frac{1}{k}(\ln  r -\ln R) \leq \ln h(t, r)-\ln h(t,R)
	\end{equation}
Let us set
\begin{align*}
	\tilde a\coloneqq  \inf_{t\in[0,T]} \min_{\abs{\xi}=1} H(t,R\xi)\geq m 
	&&\text{and}&&
 a\coloneqq\begin{cases}
     \tilde{a},&\text{if }R\leq 1\\
     \tilde{a} R^{-\frac{1}{k}}&\text{if }R>1
 \end{cases}   
\end{align*}
where the bound $\tilde{a}\geq m>0$ is a consequence of the lower bound in \ref{hyp:H2}  and implies that $a>0$. Thus, by equation \eqref{eq:ineq-h(t,r)}, we have that $a>0$ and
\begin{equation*}
	\ln(h(t,r))\geq\frac{1}{k}\ln r+\ln a \,.
\end{equation*}
 Hence
	\begin{equation*}
		h(t,r)\geq a   r^\frac{1}{k}\qquad  \qquad \text{for every $r>R$}
	\end{equation*}
	which, since the value of $a$ does not depend on $t$, proves the claim.
\end{proof}

The next lemma illustrates the rotational properties of solutions far from the origin.

\begin{lemma}
    \label{lem:rotation_large} 
 Let \ref{hyp:A1}, \ref{hyp:A2} and \ref{hyp:H2} hold. Then, for every $n\in\N$ there exists a radius $\tilde{R}_{n}>0$ such that, for every solution $\zeta(t;0,z_0)$ of the initial value problem \eqref{eq:HS_IVP}  satisfying 
 \begin{equation}\label{eq:stay_afar}
    \abs{\zeta(t;0,z_{0})}\geq\tilde{R}_{n}\qquad \text{for every $t\in[0,T]$ at which it is defined,}
\end{equation}
it holds $\GRot_T(z_{0})>n$.
\end{lemma}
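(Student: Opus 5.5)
We need to show: if the orbit stays at radius $\geq\tilde R_n$ throughout its existence interval, then the rotation over $[0,T]$ exceeds $n$. There are two cases. If the solution blows up before $T$, then $\GRot_T=+\infty$ by definition and there is nothing to prove. So the substantive case is a solution defined on all of $[0,T]$ that stays at radius at least $\tilde R_n$.

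\textbf{Step 1: an energy-type lower bound on the angular velocity.} For $\abs{z}>R=\sqrt{2\rr}$, the proof of Lemma~\ref{lem:H2impliesA6} gives
\begin{equation*}
	\dot\theta=\frac{\pscl{\nabla_zH(t,z)}{z}}{\abs{z}^2}\geq \frac{H(t,z)}{k\abs{z}^2}.
\end{equation*}
Lemma~\ref{lem:growth_H_under_Ambrosetti-Rabinowitz} then yields
\begin{equation*}
	\dot\theta(t)\geq \frac{a}{k}\abs{z(t)}^{\frac1k-2}\qquad\text{for a.e. } t .
\end{equation*}
The exponent satisfies $\frac1k-2>0$ because $k<\frac12$. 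Note that this bound is uniform in $t$ thanks to the constant $a$, which is why the Carathéodory version of \ref{hyp:H2} with $m>0$ was needed.

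\textbf{Step 2: choice of the radius.} Along a solution satisfying \eqref{eq:stay_afar} with $\tilde R_n\geq R$, Step~1 gives
\begin{equation*}
	\dot\theta\geq \frac{a}{k}\tilde R_n^{\frac1k-2}\qquad\text{a.e. on }[0,T].
\end{equation*}
Integrating and using \eqref{eq:rot_index},
\begin{equation*}
	\rot_T(z_0)\geq \frac{T}{2\pi}\,\frac{a}{k}\,\tilde R_n^{\frac1k-2}.
\end{equation*}
We therefore choose $\tilde R_n>R$ so large that the right-hand side exceeds $n$, and the claim follows.

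No time-dependent integrability issues arise, since the pointwise a.e. lower bound is uniform. The only point requiring care is the blow-up case. There, $\rot_t$ is merely undefined at $T$, and the definition \eqref{eq:extended_rotat_index} sets $\GRot_T=+\infty>n$, so this case is handled at once. The main work is thus just confirming that the bound of Lemma~\ref{lem:growth_H_under_Ambrosetti-Rabinowitz} holds for all $\abs{z}>R$ with a time-independent constant, which that lemma provides.
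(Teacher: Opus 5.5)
Your proof is correct and follows the paper's argument. The blow-up case is settled by the definition of $\GRot$; otherwise, combining the Ambrosetti--Rabinowitz inequality with Lemma~\ref{lem:growth_H_under_Ambrosetti-Rabinowitz} gives a time-uniform lower bound on $\dot\theta$ that grows with $\abs{z}$, which you integrate over $[0,T]$ (the paper does the same in canonical polar coordinates, only weakening $\frac{1}{2k\rho}$ to $\frac{1}{\rho}$ instead of keeping your factor $1/k$).
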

\begin{proof}
	By \eqref{eq:stay_afar}, we have $z_0\notin \Null$, hence $\GRot_T(z_0)$ is well-defined. If $\zeta(t;0,z_{0})$ blows up within $(0,T]$, the conclusion is trivial by the definition \eqref{eq:extended_rotat_index} of $\GRot$. 
	
	We now consider the case where $\zeta(t;0,z_{0})$ is defined on $[0,T]$.
    By assumption \ref{hyp:H2} and Lemma~\ref{lem:growth_H_under_Ambrosetti-Rabinowitz}, we obtain that  $H(t,z)\geq a\abs{z}^{1/k}$ for $\abs{z}>R$. By \ref{hyp:H2} we deduce
    \begin{equation*}
    \dot{\theta}=\frac{\partial}{\partial \rho}H(t,\theta,\rho)\geq \frac{1}{2k\rho}H(t,\theta,\rho) 
    \geq\frac{1}{\rho}H(t,\theta,\rho)\geq \frac{a}{\rho}\left(\sqrt{2\rho}\right)^{\frac{1}{k}}= a\, 2^{\frac{1}{2k}} \, \rho^{\frac{1}{2k}-1}\,.
    \end{equation*}
    Since $\tfrac{1}{2k}-1>0$, we deduce that for every $n>0$ there exists $\tilde{R}_{n}$ such that 
    \begin{equation}\label{eq:dottheta_bound}
    \dot{\theta}\bigr\rvert_{(t,z)}>\frac{2\pi n}{T} \qquad \text{for a.e. $t\in[0,T]$ and every $\abs{z}\in [\tilde{R}_n,+\infty)$}\,.
    \end{equation}
Given such $\tilde{R}_{n}$, if $\abs{\zeta(t;0,z_0)}>\tilde{R}_{n}$ for a.e. $t\in[0,T]$, integrating \eqref{eq:dottheta_bound} over $[0,T]$ completes the proof.
\end{proof}

We are now ready to recover \ref{hyp:A5}.

\begin{lemma}
    \label{lem:hs_proofA5}
	Let \ref{hyp:H0}, \ref{hyp:H1} and \ref{hyp:H2} hold. Then also \ref{hyp:A5} holds. In particular, for every $n\in\N$ there exists a radius $R_{n}>0$ such that, for every $\abs{z_{0}}>R_{n}$, we have $\GRot_T(z_{0})>n$.
\end{lemma}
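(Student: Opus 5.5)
The plan is to reduce \ref{hyp:A5} to Lemma~\ref{lem:rotation_large}. For a fixed $n$, that lemma gives $\GRot_T(z_0)>n$ as soon as the solution stays at distance at least $\tilde R_n$ from the origin throughout its interval of existence. The task is therefore a \emph{confinement} statement: there is $R_n$ such that every solution with $\abs{z_0}>R_n$ either stays outside $B(0,\tilde R_n)$ on $[0,T]$, or rotates more than $n$ times before it could enter that ball.

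\textbf{Step 1: controlling $\rho$ through the rotation.} Work in the canonical polar coordinates \eqref{eq:canonical-polar-coordinates} and fix $\rr'\geq\rr$ with $\tilde R_n^2/2\leq \rr'$. As long as $\rho(t)\geq\rr'$, \ref{hyp:H1} gives $\dot\theta\geq c\abs{\dot\rho}^\gamma$. For $\gamma=1$ this says
\[
\abs{\rho(t)-\rho(s)}\leq \frac{1}{c}\bigl(\theta(t)-\theta(s)\bigr).
\]
For $\gamma>1$, Jensen's inequality as in \eqref{eq:Jensen-ineq} gives
\[
\abs{\rho(t)-\rho(s)}\leq \int_s^t\abs{\dot\rho}\leq T^{1-1/\gamma}\Bigl(\tfrac{1}{c}\bigl(\theta(t)-\theta(s)\bigr)\Bigr)^{1/\gamma}.
\]
In both cases, a decrease of $\rho$ by a large amount $\Delta$ while $\rho\geq\rr'$ forces an increase of $\theta$ of at least $g(\Delta)$, where $g(\Delta)=\min\{c\Delta,\,c\,\Delta^{\gamma}T^{1-\gamma}\}$ tends to $+\infty$.

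\textbf{Step 2: case split.} Choose $\rho_n$ so large that $g(\rho_n-\rr')>2\pi(n+1)$, and set $R_n=\sqrt{2\rho_n}$. Take $\abs{z_0}>R_n$.
\begin{itemize}[label=$\circ$]
\item If the solution never reaches $\rho=\rr'$ on its interval of existence, then $\abs{\zeta}\geq\tilde R_n$ throughout. Lemma~\ref{lem:rotation_large} gives $\GRot_T(z_0)>n$; this also covers blow-up.
\item Otherwise, let $t_1$ be the first time with $\rho(t_1)=\rr'$. On $[0,t_1]$ we have $\rho\geq\rr'$, and $\rho$ drops from more than $\rho_n$ to $\rr'$. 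Step~1 then gives $\theta(t_1)-\theta(0)>2\pi(n+1)$, that is, $\rot_{t_1}(z_0)>n+1$.
\end{itemize}

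\textbf{Step 3: main obstacle.} The hardest step is controlling the rotation on $[t_1,T]$ in the second case, since there the solution may approach the origin or even belong to $\Null$. I would first aim to show that such large initial data cannot reach $\rho=\rr'$ at all. For this I would combine Step~1 with the lower bound $\dot\theta\geq a2^{1/(2k)}\rho^{1/(2k)-1}$ from the proof of Lemma~\ref{lem:rotation_large}, giving
\[
\abs{\dot\rho}\leq \bigl(\dot\theta/c\bigr)^{1/\gamma}\leq C\,\dot\theta,
\]
for large $\rho$, with $C$ small. Integrating over time, the loss in $\rho$ is at most $C$ times the gain in $\theta$.

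Failing that, I would use Lemma~\ref{lem:H2impliesA6}: $\theta$ is nondecreasing while $\rho>\rr$, and below that level it is only bounded in time via Lemma~\ref{lem:rotation_lowerbound}. This handles the unwinding on $[t_1,T]$ only for points outside some $\DD_\delta$. Points of $\Null$ and of the compact set $\DD_\delta$ are bounded by Proposition~\ref{prop:Dcompact}, so enlarging $R_n$ beyond $\DD_\delta$ excludes them. Then
\[
\GRot_T(z_0)\geq\rot_{t_1}(z_0)-\Lambda,
\qquad \Lambda=\int_0^T a_\delta ,
\]
and it suffices to choose $\rho_n$ so large that $g(\rho_n-\rr')>2\pi(n+1+\Lambda)$. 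This last version is the one I expect to close the argument cleanly.
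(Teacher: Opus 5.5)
Your final argument (the fallback in Step~3) is correct and essentially the paper's proof. It uses the same three-way split (blow-up; staying outside $B(0,\tilde R_n)$, handled by Lemma~\ref{lem:rotation_large}; or a first entrance time), the same \ref{hyp:H1}-plus-Jensen estimate giving $\rot_{t_1}(z_0)>n+\Lambda$, and the same use of Lemma~\ref{lem:rotation_lowerbound} with $R_n>R_\delta$ so that $z_0\notin\DD_\delta$; using the level $\rr'\geq\max\{\rr,\tilde R_n^2/2\}$ instead of $\tilde R_n$ is only cosmetic. Drop the first idea in Step~3 (that large data never reach $\rho=\rr'$): the bound $\abs{\dot\rho}\leq C\dot\theta$ cannot give it, since $\theta$ may gain arbitrarily much, and you do not need it. Also state that blow-up after $t_1$ is trivial, because then $\GRot_T(z_0)=+\infty$.
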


\begin{proof}
	By \ref{hyp:H0} we can apply Proposition~\ref{prop:Dcompact} and take a sufficiently small $\delta>0$, precisely $\delta<r^*$, such that $\DD_{\delta}$ is compact. Then, there exists $R_\delta>0$ such that $\abs{z_0}<R_\delta$ for every $z_0\in \DD_\delta$. In particular, by Lemma~\ref{lem:Ddelta_chain}, $\abs{z_0}<R_\delta$ for every $z_0\in \Null$, so that $\GRot_T(z_0)$ is well-defined for $\abs{z_0}>R_\delta$.
	
Let $\Lambda\coloneqq \int_0^T a_\delta(t)\dd t \geq 0$, with $a_\delta$ from Lemma~\ref{lem:rotation_lowerbound}, 
 and pick any
\begin{equation}\label{eq:defin_Rn}
R_n>\max\left\{ R_\delta,\tilde R_n,\sqrt{\tilde{R}_{n}^{2}+2\left[\frac{2\pi(n+\Lambda)}{T^{(1-\gamma)}c}\right]^{1/\gamma}} \right\} \,,
\end{equation}
where $c$ and $\gamma$ are the constants from \ref{hyp:H1}.

	For any $z_0$ with $\abs{z_0}>R_n$, let $(\theta(t),\rho(t))$ be the description in canonical polar coordinates \eqref{eq:canonical-polar-coordinates} of the solution $\zeta(t;0,z_0)$. We distinguish between three alternative situations:
	\begin{itemize}
		\item if $\zeta(t;0,z_0)$ blows up within $(0,T]$, the claim follows by the definition \eqref{eq:extended_rotat_index} of $\GRot$;
		\item if $\abs{\zeta(t;0,z_0)}\geq \tilde R_n$ for every $t\in [0,T]$ the claim follows from Lemma~\ref{lem:rotation_large};
		\item else, let $\bar t\coloneqq \inf\{t\in(0,T] : \abs{\zeta(t;0,z_0)}<\tilde R_n\}\in(0, T)$ and proceed as follows.
	\end{itemize}
 Recalling that $\abs{z}=\sqrt{2\rho}$, we have $\rho(0)\geq R_{n}^{2}/2$.
Since by continuity of the solution $\abs{\zeta(\bar t;0,z_0)}=\tilde R_n$, using \eqref{eq:defin_Rn} we have
    \begin{equation}
    	\label{eq:estimate_rho}
    \int_{0}^{\bar{t}}\abs{\dot{\rho}(s)}\dd s
    \geq \int_{\bar{t}}^{0}\dot{\rho}(s)\dd s
    =\rho(0)-\rho(\bar{t}) 
    \geq \frac{R_{n}^{2}}{2}-\frac{\tilde{R}_{n}^{2}}{2}>\left[\frac{2\pi(n+\Lambda)}{T^{(1-\gamma)}c}\right]^{1/\gamma}\,.
    \end{equation}
    Thus,
    \begin{align*}
    	\rot_{\bar t}(z_0) &=\frac{1}{2\pi}\int_{0}^{\bar{t}}\dot{\theta}(s)\dd s
    	\stackrel{\ref{hyp:H1}}{\geq}\frac{1}{2\pi}\int_{0}^{\bar{t}}c\abs{\dot{\rho}(s)}^{\gamma}\dd s \\
    	&\stackrel{\eqref{eq:Jensen-ineq}}{\geq}\frac{\bar t^{(1-\gamma)}c}{2\pi}
         \left(\int_{0}^{\bar{t}}\abs{\dot{\rho}(s)}\dd s \right)^\gamma
     \stackrel{\eqref{eq:estimate_rho}}{>} \frac{T^{(1-\gamma)}c}{2\pi} \left[\frac{2\pi(n+\Lambda)}{T^{(1-\gamma)}c}\right]=n+\Lambda \,.
    \end{align*}
This last estimate, combined with Lemma~\ref{lem:rotation_lowerbound}, yields
	\begin{equation}
\rot_T(z_{0})\geq\rot_{\bar t}(z_{0})-\int_{\bar t}^T a_\delta(s)\dd s > n+\Lambda -\int_{\bar t}^T a_\delta(s) \dd s \geq n \,,
\end{equation}
completing the proof also in the third case.
\end{proof}

\begin{proof}[Proof of Theorem~\ref{thm:main_hamiltonian}]
By \ref{hyp:H0}  and Lemmas~\ref{lem:hs_blowup_rot}, \ref{lem:hs_proofA5} and \ref{lem:H2impliesA6}, the six conditions \ref{hyp:A1}--\ref{hyp:A6} hold for  \eqref{eq:HS}, so we can apply Theorem~\ref{thm:main_theorem} to obtain the existence of a $T$-periodic solution.
\end{proof}

\section{An example} \label{sec:example}

We now construct a planar Hamiltonian system \eqref{eq:HS} with a blow-up solution, for which assumptions~\ref{hyp:A4} and \ref{hyp:H1} may hold or fail depending on the values of the parameters $\alpha$ and $\beta$. Precisely, working in canonical polar coordinates \eqref{eq:canonical-polar-coordinates}, we construct a Hamiltonian system admitting the solution $(\theta^*,\rho^*)\colon [0,T_\mathrm{F})\to \R\times (0,+\infty)$
\begin{align}\label{eq:example_solution}
\theta^*(t)\coloneqq \alpha (\sigma_{0}T_\mathrm{F})^{\alpha-1}\int_{0}^{t}(T_\mathrm{F}-s)^{-\beta\left(\alpha-1\right)}\,\dd s, &&\rho^*(t)\coloneqq\frac{\sigma_{0}T_\mathrm{F}}{(T_\mathrm{F}-t)^{\beta}}
\end{align}
where $T_\mathrm{F}\in(0,T]$ is the blow-up time,  while $\alpha>1$ and $\beta>0$ are parameters controlling the growth of the two coordinates. The parameter $\sigma_{0}$  controls the initial radius $\rho^*(0)=\sigma_0 T_\mathrm{F}^{1-\beta}$. We assume $\sigma_0\geq T_\mathrm{F}^{\beta-1}$ so that  $\rho^*(0)\geq 1$.

Denoting by $z^*_0$ the initial value of the blow-up solution \eqref{eq:example_solution}, we have two cases, according to the value of $\beta\left(\alpha-1\right)$:
\begin{itemize}
\item if $\beta\left(\alpha-1\right)\geq 1$, then $\displaystyle\lim_{t\to T_\mathrm{F}} \rot_t(z^*_0)=\displaystyle\lim_{t\to T_\mathrm{F}} \theta^*(t)=+\infty$\,; 
\item if $\beta\left(\alpha-1\right)< 1$, then 
\begin{equation*}
\lim_{t\to T_\mathrm{F}} \rot_t(z^*_0)=\frac{1}{2\pi}\lim_{t\to T_\mathrm{F}} \theta^*(t)=\frac{1}{2\pi} \frac{\alpha \sigma_0^{\alpha-1} }{1-\beta\left(\alpha-1\right)}T_\mathrm{F}^{\alpha-\beta\left(\alpha-1\right)} \,.
\end{equation*}
\end{itemize}

We construct the Hamiltonian $H$ as
\begin{equation}\label{eq:example_hamiltonian}
	H(t,\theta,\rho)=H_{0}(\rho)+K(t,\theta,\rho)\,,\qquad H_{0}(\rho)\coloneqq \rho^\alpha \,.
\end{equation}
The term $K$ will be defined to be always zero for $\rho<\frac{1}{2}$, so that $H$ can be suitably extended to the origin, where it would take value zero. Also notice that $H_0$ grows as $\frac{1}{2^\alpha}\abs{z}^{2\alpha}$,  since $\abs{z}=\sqrt{2\rho}$.

To define $K$, we set a small $\epsilon>0$ and introduce the two auxiliary functions $f(t,u),g(u)$ with the following properties:
\begin{itemize}
	\item[$f\colon$] let $f\colon [0,T_\mathrm{F})\times\R\to(-\epsilon,\epsilon)$ be $\mathcal{C}^\infty$-smooth, $2\pi$-periodic in the second variable $u$, satisfying $\norm{f}_\infty<\epsilon$ and, for every $t\in[0,T_\mathrm{F})$: \begin{align*}
	&\frac{\partial}{\partial u}f(t,0)=-\dot \rho^*(t)=-\frac{\sigma_0 T_\mathrm{F}\beta}{(T_\mathrm{F}-t)^{\beta+1}}\,,\\
	& -\dot \rho^*(t) \leq \frac{\partial}{\partial u} f(t,\cdot)<\frac{\beta}{T_\mathrm{F}}
		\end{align*}
	and $\supp f(t,\cdot)=[-1/2,1/2]+2\pi\Z$. Notice that $\dot \rho^*(t)\geq\dot \rho^*(0)\geq \frac{\beta}{T_\mathrm{F}}$ for every $t\in[0,T_\mathrm{F})$; 
	\item[$g\colon$] let $g\colon\R\to[0,1]$ be $\mathcal{C}^\infty$-smooth with $\supp g=[-1/2,1/2]$, $g(0)=1$, $g'(0)=0$ and  $\norm{g'}_{\infty}\leq L_{g}$ for some constant $L_g$.
\end{itemize}

We now set
\begin{equation}
	\label{eq:def_K}
	K(t,\theta,\rho)\coloneqq \begin{cases}f(t,\theta-\theta^*(t))\,g(\rho-\rho^*(t)) &\text{if $t\in[0,T_\mathrm{F})$}\\
		0 &\text{if $t\in[T_\mathrm{F},T)$}
	\end{cases}
\end{equation}
and extend it $T$-periodically to $t\in\R$.

First of all, we observe that \eqref{eq:example_solution} is a solution of \eqref{eq:ham-system-canonical-polar-coordinates} with Hamiltonian function \eqref{eq:example_hamiltonian}. Indeed, we have
\begin{align*}
	&\frac{\partial H}{\partial \theta}(t,\theta^*(t),\rho^*(t))=\frac{\partial K}{\partial \theta}(t,\theta^*(t),\rho^*(t))=-\dot\rho^*(t)\\
	&\frac{\partial H}{\partial \rho}(t,\theta^*(t),\rho^*(t))=\frac{\partial H_0}{\partial \rho}(t,\theta^*(t),\rho^*(t))=\alpha \rho^*(t)^{\alpha-1}=\dot\theta^*(t) \,.
\end{align*}
We will prove that, for the Hamiltonian system defined by \eqref{eq:example_hamiltonian} as above:
\begin{itemize}
\item	\ref{hyp:H0} and \ref{hyp:H2} hold for all $\alpha>1,\beta>0$ (cf.~Lemma~\ref{lem:exampleH0H2}); furthermore, also \ref{hyp:A1}, \ref{hyp:A2}, \ref{hyp:A3}, \ref{hyp:A5}, \ref{hyp:A6} are satisfied for every values $\alpha>1,\beta>0$ (cf.~Lemma~\ref{lem:exampleAminus4})  ; 
\item \ref{hyp:H1} holds if and only if $\beta(\alpha-2)\geq 1$ (cf.~Lemma~\ref{lem:exampleH1});
\item \ref{hyp:A4} holds if and only if $\beta(\alpha-1)\geq 1$ (cf.~Lemma~\ref{lem:exampleA4});
\end{itemize}

\begin{remark}
The last point gives a clear example of how \ref{hyp:A4}, which is always satisfied by the superlinear ODE \eqref{eq:superlinearODE}--\eqref{eq:superlinearODEgrowth}, might instead fail for general superquadratic Hamiltonian systems \eqref{eq:HS}, even when \ref{hyp:H0} and \ref{hyp:H2} hold. The possibility to drop assumption \ref{hyp:A4} from Theorem~\ref{thm:main_theorem} remains an open problem.
\end{remark}

\begin{remark}
The example also illustrates the advantage of the abstract rotational characterization given in Section~\ref{sec:main} with respect to pointwise estimates, such as \ref{hyp:H1} and \ref{hyp:H2}. While the latter two conditions might be easier to verify in some specific example, they might be too restrictive in other situations.
Here we showed it for \ref{hyp:H1}, but an analogous observation for the Ambrosetti--Rabinowitz condition \ref{hyp:H2} was done in \cite{Bos12}, and emerges already from the superlinear ODE \eqref{eq:superlinearODE}--\eqref{eq:superlinearODEgrowth}, which fails \ref{hyp:H2} but satisfies \ref{hyp:A4}, \ref{hyp:A5}, \ref{hyp:A6}.
\end{remark}

\begin{remark}
In the Hamiltonian system defined by \eqref{eq:example_hamiltonian}, there is a trivial $T$-periodic solution given by $z(t)\equiv 0$. We chose this structure to make clearer the role of the rotational assumptions at large radii. Notice however that the system can be easily modified to destroy such trivial $T$-periodic solution, without interfering with the behaviour at large.
Indeed, it is sufficient to add to the Hamiltonian \eqref{eq:example_hamiltonian} an additional perturbative $\mathcal{C}^\infty$-smooth term $K_0(t,z)$, $T$-periodic in time, with $\supp K_0(t,\cdot)=B[0,\sqrt{2\rho^*(0)-1}]$ for every $t$ and $\abs{\nabla_{z}K_0}\leq \frac{\sqrt{2\rho^*(0)-1}}{T}$, in the spirit of condition~\ref{cond:A3estimate}. Since this perturbation acts only on the region $\rho<\rho^*(0)-\frac{1}{2}$, all the lemmas above remain valid.
\end{remark}

\begin{lemma}\label{lem:exampleH0H2}
For every value $\alpha>1,\beta>0$ of the parameters, \ref{hyp:H0} and \ref{hyp:H2} hold for the Hamiltonian system defined by \eqref{eq:example_hamiltonian}.
\end{lemma}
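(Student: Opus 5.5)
We have to check two groups of properties: (H0), meaning $H$ is $\mathcal C^1$ in $z$ and $F=J\nabla_z H$ satisfies (A1)--(A3); and (H2), the Ambrosetti--Rabinowitz inequality in canonical polar coordinates.

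\textbf{Step 1: regularity.} We first look at the structure of $K$. Since $g(\rho-\rho^*(t))$ vanishes unless $\abs{\rho-\rho^*(t)}\le \tfrac12$, and $\rho^*(t)\ge\rho^*(0)\ge 1$, we get $K\equiv0$ on $\{\rho<\tfrac12\}$. Hence near the origin $H=\rho^\alpha=2^{-\alpha}\abs{z}^{2\alpha}$, which is $\mathcal C^1$ in $z$ (indeed $\mathcal C^2$, because $2\alpha>2$). Away from the origin the polar change of coordinates is smooth, and $K(t,\cdot)$ is $\mathcal C^\infty$ for each fixed $t<T_\mathrm{F}$. So $H(t,\cdot)\in\mathcal C^1$ for all $t$.

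\textbf{Step 2: (A1) and (A2).} Measurability in $t$ is clear, since $K$ is continuous in $t$ on $[0,T_\mathrm{F})$ and zero afterwards. For the local $L^1$ bound we fix a compact set $K_0\subset\R^2$, contained in $\{\rho\le M\}$. Because $\rho^*(t)\to+\infty$, there is $t_M<T_\mathrm{F}$ with $\rho^*(t)>M+\tfrac12$ for $t>t_M$. Thus on $K_0$ we have $K\ne0$ only for $t\in[0,t_M]$, a compact time interval on which $f,\partial_u f,g,g'$ are bounded. This gives $\nabla_zH$ bounded on $[0,T]\times K_0$, hence (A1).

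The same localisation gives uniqueness. For a.e.\ $t$ the field is locally Lipschitz in $z$, with a Lipschitz constant on $K_0$ that is bounded uniformly in $t$: use the second derivatives of $f$ and $g$ on $[0,t_M]$, and $\rho^\alpha$ is $\mathcal C^2$. Local Lipschitz continuity yields (A2).

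This step needs care. We must choose $f$ with $\partial_u^2 f$ bounded on compact subsets of $[0,T_\mathrm{F})$, which the smooth construction permits. We must also note that $\theta$-derivatives are multiplied by factors $1/\sqrt{\rho}$ when converting to Cartesian gradients; these factors are harmless because $\rho\ge\tfrac12$ on $\supp K$.

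\textbf{Step 3: (A3).} Solutions starting at the origin at any time $\bar t$ see only $H_0$ while $\rho<\tfrac12$. In canonical coordinates $H_0$ gives $\dot\rho=0$, so $\rho\equiv0$: the origin is an equilibrium, since $\nabla_zH(t,0)=0$. By uniqueness the solution is $z\equiv0$, which exists on $[0,T]$. This is the easy part.

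\textbf{Step 4: (H2).} For $\rho\ge\rr$ we estimate
\[
H\ge\rho^\alpha-\epsilon\ge m>0
\qquad\text{and}\qquad
2k\rho\,\partial_\rho H = 2k\alpha\rho^\alpha+2k\rho\, f\, g'(\rho-\rho^*),
\]
where the last term is bounded in absolute value by $2k\epsilon L_g\rho$. We pick $k\in(1/(2\alpha),1/2)$, which is possible since $\alpha>1$. Then we need
\[
\rho^\alpha+\epsilon\le 2k\alpha\rho^\alpha-2k\epsilon L_g\rho ,
\]
and this holds for $\rho$ large because $2k\alpha>1$ and $\alpha>1$. This fixes $\rr$, and the constants are uniform in $t$ because $\norm{f}_\infty<\epsilon$ and $\norm{g'}_\infty\le L_g$.

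The main obstacle is the uniformity of the Carathéodory bounds as $t\to T_\mathrm{F}$, where $\partial_u f$ blows up. It is resolved by the observation in Step 2: on any bounded set, $K$ vanishes for $t$ close to $T_\mathrm{F}$.
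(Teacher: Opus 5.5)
Your proposal is correct and follows essentially the paper's route. For (H0) you use the smoothness of $H$ together with the fact that $z\equiv 0$ is an equilibrium. For (H2) you use $\abs{H-\rho^\alpha}\le\epsilon$ and $\partial_\rho H\ge\alpha\rho^{\alpha-1}-\epsilon L_g$, with $k\in(\frac{1}{2\alpha},\frac12)$ and $\rr$ large. Your Step~2 observation, that on bounded sets $K$ vanishes for $t$ close to $T_\mathrm{F}$, supplies the Carathéodory and uniform Lipschitz bounds that the paper simply asserts ``by construction''.
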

\begin{proof}
By construction, $H(t,z)$ is continuously differentiable in $z$ and $F(t,z)\coloneqq J\nabla_{z}H(t,z)$ satisfies the Carathéodory conditions, is $T$-periodic in $t$ and is locally Lipschitz continuous in $z$, so conditions \ref{hyp:A1}, \ref{hyp:A2} are satisfied. Since $z=0$ is a constant trivial solution, \ref{hyp:A3} is trivially satisfied and therefore also \ref{hyp:H0} holds. 

Before we proceed, let us observe that for every $t\in[0,T_\mathrm{F})$:
\begin{gather}
	\abs{H(t,\theta,\rho)-H_{0}(\rho)}=\abs{K(t,\theta,\rho)}=\abs{f(t,\theta-\theta^*(t))\, g(\rho-\rho^*(t))}\leq \epsilon \notag\\[2mm]
	\abs{\frac{\partial K}{\partial{\rho}}(t,\theta,\rho)}\leq\abs{f(t,\theta-\theta^*(t)) \, g'(\rho-\rho^*(t))}\leq  \epsilon L_g \label{eq:example_estimate_dottheta}\\[2mm]
	-\dot \rho^*(t)\leq\frac{\partial H}{\partial{\theta}}(t,\theta,\rho)=\frac{\partial K}{\partial{\theta}}(t,\theta,\rho)=\frac{\partial f}{\partial \theta}(t,\theta-\theta^*(t)) \, g(\rho-\rho^*(t))\leq \frac{\beta}{T_\mathrm{F}} \,.\label{eq:example_estimate_dotrho}
\end{gather}
Hence 
\begin{equation} \label{eq:example_boundH}
	 \rho^{\alpha}-\epsilon=H_{0}(\rho)-\epsilon\leq H(t,\theta,\rho)\leq H_{0}(\rho)+\epsilon=\rho^{\alpha}+\epsilon
\end{equation}
 and moreover
\begin{equation} \label{eq:example_bound_dHdrho}
	\frac{\partial H}{\partial{\rho}}(t,\theta,\rho)\geq \alpha\rho^{\alpha-1}-\epsilon L_g.
\end{equation}
Take any $m>0$. By \eqref{eq:example_boundH}, taking $\rr_{1}\coloneqq (\epsilon+m)^{1/\alpha}$ we obtain 
\begin{equation*}
	H(t,\theta,\rho)>m,\quad \text{for every $\rho>\rr_{1}$, $\theta\in\R$ and a.e.~$t\in[0,T]$.}
\end{equation*}
Now, fix any $k\in\left(\frac{1}{2\alpha},\frac{1}{2}\right)$. As $\alpha>1$ and $2k\alpha-1>0$, there exists an $\rr_{2}>0$ such that 
\begin{equation}\label{eq:example_estimate1}
	(2k\alpha-1)\rho^{\alpha}-2k\epsilon L_g\rho>\epsilon\qquad\text{for every }\rho>\rr_{2}\,.
\end{equation}
Using, respectively, \eqref{eq:example_boundH}, \eqref{eq:example_estimate1} and \eqref{eq:example_bound_dHdrho}, we obtain  
\begin{equation}
	H(t,\theta,\rho)<\rho^{\alpha}+\epsilon<2k\rho(\alpha\rho^{\alpha-1}-\epsilon L_g)\leq 2k\rho\frac{\partial H}{\partial{\rho}}(t,\theta,\rho)
\end{equation}
for every $\rho>\rr_{2}$, $\theta\in\R$ and a.e.~$t\in[0,T]$. Taking $\rr>\max\{\rr_1,\rr_2\}$ gives~\ref{hyp:H2}.
\end{proof}

\begin{lemma}\label{lem:exampleAminus4}
	For every value $\alpha>1,\beta>0$ of the parameters, \ref{hyp:A1}, \ref{hyp:A2}, \ref{hyp:A3}, \ref{hyp:A5}, \ref{hyp:A6} hold for the Hamiltonian system defined by \eqref{eq:example_hamiltonian}.
\end{lemma}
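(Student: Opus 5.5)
The plan is to verify the five conditions separately, reusing the earlier lemmas wherever possible. The only genuinely new work is \ref{hyp:A5}. We cannot obtain it from Lemma~\ref{lem:hs_proofA5}, since that lemma needs \ref{hyp:H1}, which fails for some parameter values. We will instead replace \ref{hyp:H1} by a one-sided bound on $\dot\rho$ that holds for every $\alpha>1$, $\beta>0$.

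\emph{Conditions \ref{hyp:A1}, \ref{hyp:A2}, \ref{hyp:A3}, \ref{hyp:A6}.} These were already verified in the proof of Lemma~\ref{lem:exampleH0H2}. There $F=J\nabla_z H$ was shown to be Carathéodory, $T$-periodic and locally Lipschitz in $z$, and $z\equiv0$ is a solution, which gives \ref{hyp:A3}. Lemma~\ref{lem:exampleH0H2} also shows that \ref{hyp:H2} holds, so Lemma~\ref{lem:H2impliesA6} yields \ref{hyp:A6} with $\ell\geq0$.

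\emph{Condition \ref{hyp:A5}.} The key estimate is \eqref{eq:example_estimate_dotrho}. For $t\in[0,T_\mathrm{F})$ it gives
\begin{equation*}
\dot\rho=-\frac{\partial H}{\partial\theta}\geq-\frac{\beta}{T_\mathrm{F}}\,.
\end{equation*}
For $t\in[T_\mathrm{F},T)$ we have $K\equiv0$, so $H=H_0(\rho)$ and $\dot\rho=0$. Hence along any solution, on its interval of existence within $[0,T]$ and as long as it avoids the origin, we get
\begin{equation*}
\rho(t)\geq\rho(0)-\frac{\beta T}{T_\mathrm{F}}\,.
\end{equation*}
Thus solutions can lose only a bounded amount of $\rho$ during one period. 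Fix $n\in\N$ and let $\tilde R_n$ be the radius from Lemma~\ref{lem:rotation_large}, which applies because \ref{hyp:H2} holds. Choose $R_n$ so large that
\begin{equation*}
\frac{R_n^2}{2}-\frac{\beta T}{T_\mathrm{F}}>\frac{\tilde R_n^2}{2}\,.
\end{equation*}
Then for $\abs{z_0}>R_n$ the solution satisfies \eqref{eq:stay_afar}. A continuity argument keeps it away from the origin and from $B(0,\tilde R_n)$, since $\rho$ cannot drop below the stated bound. Lemma~\ref{lem:rotation_large} then gives $\GRot_T(z_0)>n$; if the solution blows up, this holds trivially by \eqref{eq:extended_rotat_index}. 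This proves \ref{hyp:A5}.

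\emph{Main obstacle.} The delicate point is justifying the lower bound on $\dot\rho$ globally rather than only near the curve $\rho^*$. By construction $\partial_u f\geq-\dot\rho^*$ and $\partial_u f<\beta/T_\mathrm{F}$, and $g\in[0,1]$. Therefore $\partial_\theta K=\partial_u f\, g$ is bounded above by $\beta/T_\mathrm{F}$ everywhere, including where $g$ vanishes or is small; this requires $\beta/T_\mathrm{F}\geq0$, which holds. The upper bound on $\dot\rho$, which is unbounded near $T_\mathrm{F}$, plays no role, because only a lower bound on $\rho$ is needed. One should also check that the periodic extension in $t$ does not matter: only $t\in[0,T]$ enters the definition of $\GRot_T$.
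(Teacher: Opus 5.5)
Your proposal is correct and follows the paper's proof: \ref{hyp:A1}--\ref{hyp:A3} and \ref{hyp:A6} come from Lemma~\ref{lem:exampleH0H2} and Lemma~\ref{lem:H2impliesA6}. For \ref{hyp:A5}, the one-sided bound \eqref{eq:example_estimate_dotrho} with $R_n^2>\tilde R_n^2+2\beta T/T_\mathrm{F}$ keeps every solution outside $B(0,\tilde R_n)$, so Lemma~\ref{lem:rotation_large} applies; the paper phrases this as excluding the third case in the proof of Lemma~\ref{lem:hs_proofA5}, and your explicit remark that $\dot\rho=0$ on $[T_\mathrm{F},T)$ and your direct appeal to Lemma~\ref{lem:rotation_large}, which makes $R_\delta$ unnecessary, are minor tidy-ups of the same argument.
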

\begin{proof}
\ref{hyp:A1}, \ref{hyp:A2}, \ref{hyp:A3} follow immediately from the validity of \ref{hyp:H0} in Lemma~\ref{lem:exampleH0H2}, and \ref{hyp:A6} by this and Lemma~\ref{lem:H2impliesA6}. 

The proof of \ref{hyp:A5} follows the same line as that of Lemma~\ref{lem:hs_proofA5}, with the following adaptation to remove \ref{hyp:H1} from the assumptions. In \eqref{eq:defin_Rn}, we require instead $R_n>\max\left\{ R_\delta,\tilde R_n, \sqrt{\tilde R_n^2 +\frac{2\beta T}{T_\mathrm{F}}}\right\}$.
By \eqref{eq:example_estimate_dotrho}, for $\zeta(t;0,z_0)\neq 0$, 
\begin{equation*}
\frac{\dd}{\dd t} \abs{\zeta(t;0,z_0)}=\frac{\dot \rho}{\sqrt{2\rho(t)}}=-\frac{1}{\sqrt{2\rho(t)}}\frac{\partial H}{\partial\theta}(t,\theta(t),\rho(t))\geq -\frac{\beta}{T_\mathrm{F} \sqrt{2\rho(t)}}=-\frac{\beta}{T_\mathrm{F}\abs{\zeta(t;0,z_0)}} \,.
\end{equation*}
Multiplying both sides by $\abs{\zeta(t;0,z_0)}$ and integrating over $[0,t]$, it follows that if $\abs{z_0}>R_n$ then  $\abs{\zeta(t;0,z_0)}>\tilde R_n$ for every $t\in[0,T]$ at which the solution is defined. Hence one of the first two alternatives in the proof of  Lemma~\ref{lem:hs_proofA5} is always satisfied and it is not necessary to consider the third alternative, which was the only case requiring \ref{hyp:H1}.
\end{proof}

\begin{lemma}\label{lem:exampleH1}
For the Hamiltonian system defined by \eqref{eq:example_hamiltonian}, \ref{hyp:H1} holds if and only if $\beta(\alpha-2) \geq 1$.
\end{lemma}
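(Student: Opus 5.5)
The plan is to compare the two sides of \ref{hyp:H1} directly, using the explicit structure \eqref{eq:example_hamiltonian}--\eqref{eq:def_K}. By construction $\partial H/\partial\theta=\partial K/\partial\theta$ vanishes unless $t\in[0,T_\mathrm{F})$ and $\abs{\rho-\rho^*(t)}\leq 1/2$. Moreover, \eqref{eq:example_bound_dHdrho} gives
\begin{equation*}
\frac{\partial H}{\partial\rho}\geq \alpha\rho^{\alpha-1}-\epsilon L_g>0
\end{equation*}
for $\rho$ large, so \ref{hyp:H1} holds trivially outside this band. Everything therefore reduces to the band $\abs{\rho-\rho^*(t)}\leq 1/2$. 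The key observation is that the size of $\partial H/\partial\theta$ there is governed by $\dot\rho^*(t)$. Eliminating $t$ between the two formulas in \eqref{eq:example_solution} gives
\begin{equation*}
\dot\rho^*(t)=\frac{\beta\sigma_0T_\mathrm{F}}{(T_\mathrm{F}-t)^{\beta+1}}=\beta(\sigma_0T_\mathrm{F})^{-1/\beta}\,\rho^*(t)^{1+1/\beta}.
\end{equation*}
So the question becomes a comparison between $\rho^{\alpha-1}$ and $\rho^{\gamma(1+1/\beta)}$.

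\emph{Sufficiency.} Assume $\beta(\alpha-2)\geq 1$, i.e.\ $\alpha-1\geq 1+1/\beta$, and take $\gamma=1$. By \eqref{eq:example_estimate_dotrho}, in the band we have
\begin{equation*}
\abs{\partial H/\partial\theta}\leq\max\{\dot\rho^*(t),\beta/T_\mathrm{F}\}=\dot\rho^*(t),
\end{equation*}
using $\dot\rho^*\geq\beta/T_\mathrm{F}$. For $\rho\geq\rr\geq 1$ in the band, $\rho^*(t)\leq\rho+\tfrac12\leq 2\rho$, hence
\begin{equation*}
\abs{\partial H/\partial\theta}\leq C\rho^{1+1/\beta}\leq C\rho^{\alpha-1}.
\end{equation*}
Choosing $\rr$ large so that $\alpha\rho^{\alpha-1}-\epsilon L_g\geq\tfrac{\alpha}{2}\rho^{\alpha-1}$, we obtain \ref{hyp:H1} with $\gamma=1$ and $c=\alpha/(2C)$.

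\emph{Necessity.} Suppose \ref{hyp:H1} holds with some $c>0$ and $\gamma\geq1$. Evaluate it along the curve $(\theta^*(t),\rho^*(t))$, $t\in[0,T_\mathrm{F})$. There $\partial H/\partial\theta=-\dot\rho^*(t)$ and $\partial H/\partial\rho=\alpha\rho^*(t)^{\alpha-1}$, since $g(0)=1$ and $g'(0)=0$. Because $\rho^*(t)\to+\infty$, for all $t$ in a nontrivial interval $(t_0,T_\mathrm{F})$ we have $\rho^*(t)\geq\rr$, and the inequality would give
\begin{equation*}
\alpha\rho^{*\,\alpha-1}\geq c\,C^\gamma\rho^{*\,\gamma(1+1/\beta)}.
\end{equation*}
This forces $\alpha-1\geq\gamma(1+1/\beta)\geq 1+1/\beta$, i.e.\ $\beta(\alpha-2)\geq1$.

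The main obstacle is a technical point in the necessity part: \ref{hyp:H1} is only required for a.e.\ $t$, so the failure must occur on a set of times of positive measure, not merely at isolated instants. I would handle this by noting that the failure happens for every $t$ in the whole interval $(t_0,T_\mathrm{F})$. If an a.e.\ formulation still seems delicate, one can instead use continuity in $(\theta,\rho)$ for fixed $t$ and the smooth dependence on $t$ to get a violation on a neighbourhood of the curve.
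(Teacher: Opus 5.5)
Your proposal is correct and follows essentially the same route as the paper: necessity by evaluating \ref{hyp:H1} along the special solution $(\theta^*,\rho^*)$, and sufficiency by reducing to the band $\abs{\rho-\rho^*(t)}<\frac12$ and combining $\abs{\partial H/\partial\theta}\le\dot\rho^*(t)$ with $\partial H/\partial\rho\ge\frac{\alpha}{2}\rho^{\alpha-1}$. The only difference is bookkeeping: you eliminate $t$ via $\dot\rho^*=\beta(\sigma_0T_\mathrm{F})^{-1/\beta}\rho^{*\,1+1/\beta}$ and fix $\gamma=1$, whereas the paper keeps the time variable. The paper first characterizes $\dot\theta^*\ge c^*\abs{\dot\rho^*}^\gamma$ through the exponent of $T_\mathrm{F}-t$, and then transfers this to the band via $\rho^{\alpha-1}\gtrsim\rho^*(t)^{\alpha-1}$.
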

\begin{proof}
	First, we prove that there exists $c^*>0$, $\gamma \geq 1$  such that
	\begin{equation}\label{eq:example_H1star}
\dot{\theta}^*(t)\geq c^* \abs{\dot{\rho}^*(t)}^{\gamma} \qquad \text{for every $t\in[0,T_\mathrm{F})$}
	\end{equation} 
if and only if	$\beta(\alpha-2)\geq1$. Indeed, \eqref{eq:example_H1star} is equivalent to
 \begin{equation*}
 \frac{\alpha(\sigma_0 T_\mathrm{F})^{\alpha-1}}{(T_\mathrm{F}-t)^{\beta(\alpha-1)}}\geq c^* \left(\frac{\sigma_0 T_\mathrm{F}\beta}{(T_\mathrm{F}-t)^{\beta+1}}\right)^{\gamma} \,,
 \end{equation*}
 which can be satisfied by some sufficiently small $c^*>0$ if and only if the function $(T_\mathrm{F}-t)^{\beta(\alpha-1)-\gamma(\beta+1)}$ is bounded on $t\in[0,T_\mathrm{F})$, namely if and only if $\beta(\alpha-1)-\gamma(\beta+1)\geq 0$. Hence, a suitable $\gamma \geq 1$ exists if and only if 
	\begin{equation}
	\frac{\beta(\alpha-1)}{\beta+1}\geq \gamma \geq 1
\end{equation}
can be satisfied by some $\gamma$, namely if and only if $\beta(\alpha-2) \geq 1$.
This also shows that \ref{hyp:H1} is false for $\beta(\alpha-2) < 1$.
 
Let us now prove that if $\beta(\alpha-2) \geq 1$, then \ref{hyp:H1} holds, that is for arbitrary points $(t,\theta,\rho)$. Given $c^*>0$, $\gamma \geq 1$ for which \eqref{eq:example_H1star} holds, let us take any 
\begin{equation}\label{eq:example_H1constants}
 \rr>\max\left\{\frac{3}{2}, \left(\frac{2\epsilon L_g}{\alpha}\right)^\frac{1}{\alpha-1} \right\} \qquad\text{and}\qquad c=\frac{c^*}{2}\left(1-\frac{1}{2(\rr-1)}\right)^{\alpha-1}\,.
\end{equation}

First, we notice that \ref{hyp:H1} is trivially true if $\abs{\rho-\rho^*(t)}\geq \frac{1}{2}$, since the right-hand side is zero. It remains to prove the inequality on triplets $(t,\theta,\rho)$ such that $\abs{\rho-\rho^*(t)} <\frac{1}{2}$ and $\rho>\rr$.  On such points it holds:
\begin{equation}\label{eq:example_stima}
	\rho^{\alpha-1}\geq \left(\rho^*(t)-\frac{1}{2}\right)^{\alpha-1} \geq \left(1-\frac{1}{2(\rr-1)}\right)^{\alpha-1}\rho^*(t)^{\alpha-1} =\frac{2c}{c^*} \rho^*(t)^{\alpha-1} \,.
\end{equation}
Thus we obtain
\begin{align*} 
	\frac{\partial H}{\partial{\rho}}(t,\theta,\rho)
	&\stackrel{\eqref{eq:example_bound_dHdrho}}{\geq} \alpha\rho^{\alpha-1}-\epsilon  L_g 
	\geq \frac{\alpha}{2} \rho^{\alpha-1} + \frac{\alpha}{2} \rr^{\alpha-1} -\epsilon  L_g 
	 \stackrel{\eqref{eq:example_H1constants}}{\geq}  \frac{\alpha}{2} \rho^{\alpha-1}
	\\[2mm]
	&\stackrel{\eqref{eq:example_stima}}{\geq} \frac{c\alpha}{c^*}\rho^*(t)^{\alpha-1}
	=\frac{c}{c^*}\dot{\theta}^*(t)
	\stackrel{\eqref{eq:example_H1star}}{\geq} c \abs{\dot{\rho}^*(t)}^{\gamma}\\[2mm]
	&\stackrel{\eqref{eq:example_estimate_dotrho}}{\geq} c \abs{\frac{\partial H}{\partial{\theta}}(t,\theta,\rho)}^\gamma
\end{align*}
where in the last step we also used $\abs{\dot \rho^*}\geq \frac{\beta}{T_\mathrm{F}}$, which was observed when we defined~$f$.
	
\end{proof}

To study \ref{hyp:A4}, we require a preliminary lemma. 
\begin{lemma}
	\label{lem:blow-up-sol-for-example-system}
	Let $\beta(\alpha-1)\geq1$. A solution $(\theta(t),\rho(t))$ of the Hamiltonian system defined by \eqref{eq:example_hamiltonian} blows up in finite time within $(0,T]$  if and only if there exists $\tau\in[0,T_\mathrm{F})$ such that
\begin{equation}\label{eq:lemma_explosion_example}
\abs{\rho(t)-\rho^*(t)}<\frac{1}{2}\,,\qquad \text{for every $t\in[\tau,T_\mathrm{F})$\,.}
\end{equation}
Moreover, any such solution blows up exactly at time $T_\mathrm{F}$.
\end{lemma}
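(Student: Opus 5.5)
The plan is to exploit the fact that, away from the moving strip $\{\abs{\rho-\rho^*(t)}<\tfrac12\}$, the perturbation $K$ vanishes identically, so that there the dynamics is the autonomous twist $H_0(\rho)=\rho^\alpha$, which preserves $\rho$. I will first show that the quantity $w(t)\coloneqq \rho(t)-\rho^*(t)$ is nonincreasing on $[0,T_\mathrm{F})$ along every solution for as long as it exists. There are two cases.
\begin{itemize}
\item If $\abs{w(t)}\geq \tfrac12$, then $g(\rho-\rho^*(t))=0$ near that point, so $\dot\rho=0$ and $\dot w=-\dot\rho^*(t)<0$.
\item If $\abs{w(t)}<\tfrac12$, then by \eqref{eq:example_estimate_dotrho} we have $\dot\rho=-\partial_\theta H\leq \dot\rho^*(t)$, hence $\dot w\leq 0$.
\end{itemize}
This monotonicity is the key structural fact. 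It says that solutions can leave the strip only from below, and that once below the strip they stay below.

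Next I will locate the blow-up time. For $t\in[T_\mathrm{F},T)$ we have $K\equiv0$, so $\rho$ is constant and $\dot\theta=\alpha\rho^{\alpha-1}$ is constant; no blow-up can occur there. On $[0,t_1]$ with $t_1<T_\mathrm{F}$, the fact that $\rho$ can increase only while inside the strip gives
\[
\rho(t)\leq \max\Bigl\{\rho(0),\ \sup_{s\leq t_1}\rho^*(s)+\tfrac12\Bigr\}<+\infty .
\]
Since $\dot\theta=\partial_\rho H$ is bounded on bounded $\rho$-sets (by the explicit form of $H_0$ and $\abs{\partial_\rho K}\leq\epsilon L_g$), the solution extends over $[0,t_1]$. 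Hence any blow-up within $(0,T]$ happens exactly at $T_\mathrm{F}$ and means $\rho(t)\to+\infty$ as $t\to T_\mathrm{F}^-$. This already gives the ``moreover'' part.

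\emph{Sufficiency.} If \eqref{eq:lemma_explosion_example} holds on $[\tau,T_\mathrm{F})$, then $\rho(t)>\rho^*(t)-\tfrac12\to+\infty$, so the solution blows up at $T_\mathrm{F}$.

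\emph{Necessity.} Suppose the solution blows up. If $w(t_0)\leq-\tfrac12$ for some $t_0<T_\mathrm{F}$, then by monotonicity $w\leq-\tfrac12$ on $[t_0,T_\mathrm{F})$. There $\dot\rho=0$, so $\rho$ is bounded, which is a contradiction; thus $w>-\tfrac12$ throughout. If instead $w(t)\geq\tfrac12$ for all $t<T_\mathrm{F}$, then $\rho$ is constant on $[0,T_\mathrm{F})$, again a contradiction. So there is $\tau$ with $w(\tau)<\tfrac12$, and monotonicity gives $-\tfrac12<w(t)<\tfrac12$ for all $t\in[\tau,T_\mathrm{F})$.

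The main obstacle, modest as it is, is justifying $\dot w\leq 0$ rigorously at the boundary of the strip, where $g$ passes from positive to zero. Since $H$ is smooth in $(\theta,\rho)$ and the inequality $-\dot\rho^*\leq \partial_\theta K\leq\beta/T_\mathrm{F}$ holds pointwise everywhere (with $\partial_\theta K=0$ off the strip), I will simply use $\dot\rho=-\partial_\theta K\leq\dot\rho^*(t)$ at every $t$, which avoids any case analysis at the boundary. I expect the hypothesis $\beta(\alpha-1)\geq1$ not to be needed for this argument. I will keep it as stated, since it is presumably what makes the blow-up solutions relevant to \ref{hyp:A4} in the subsequent lemma.
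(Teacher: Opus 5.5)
Your proof is correct and follows the paper's argument. The paper also obtains that $\rho(t)-\rho^*(t)$ is non-increasing from the first inequality in \eqref{eq:example_estimate_dotrho}, and uses $K\equiv0$ on $[T_\mathrm{F},T)$ together with $\dot\rho=0$ off the strip to pin the blow-up time at $T_\mathrm{F}$; it then argues necessity by contrapositive rather than directly, with monotonicity forcing $\abs{\rho-\rho^*}\geq\frac12$ eventually, and, consistent with your remark, it never uses $\beta(\alpha-1)\geq1$.
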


\begin{proof}
	By the first inequality in \eqref{eq:example_estimate_dotrho},  the function $\delta(t)\coloneqq\rho(t)-\rho^*(t)$ is non-increasing on any time interval $[0,\bar t)$ on which both $\rho(t)$ and $\rho^*(t)$ are defined. 
	
We first prove that every solution blowing up within $(0,T]$ must do it exactly at $t=T_\mathrm{F}$. Indeed no solution can blow up at a time in $(T_\mathrm{F},T]$, since the perturbation $K$ is zero in that time interval, hence the radius of each solution is constant. On the other hand, a solution blowing up at $\bar t \in(0,T_\mathrm{F})$ would imply $\delta (t)\to +\infty$ as $t\to \bar t^-$, contradicting the fact that $\delta$ is not-increasing.	
	
We can now prove the first part of the Lemma. If \eqref{eq:lemma_explosion_example} is satisfied for a certain value of $\tau$, then clearly $\rho(t)\to +\infty$ as $t\to T_\mathrm{F}^-$. 
	
Let us now assume that \eqref{eq:lemma_explosion_example} is false for every  $\tau\in[0,T_\mathrm{F})$.	
Since $\delta$ is defined on $[0,T_\mathrm{F})$ and non-increasing, there exists $\tilde t\in[0,T_\mathrm{F})$ such that $\abs{\delta (t)}\geq\frac{1}{2}$ for every $t\in [\tilde t,T_\mathrm{F})$. This means that $\dot \rho(t)=0$ for every $t\in [\tilde t,T_\mathrm{F})$ and therefore the solution is not blowing up at $T_\mathrm{F}$.
\end{proof}

\begin{lemma}\label{lem:exampleA4}
	For the Hamiltonian system defined by \eqref{eq:example_hamiltonian}, \ref{hyp:A4} holds if and only if $\beta(\alpha-1)\geq 1$.
\end{lemma}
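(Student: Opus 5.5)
The proof splits into the two directions of the equivalence, and in both directions it relies on Lemma~\ref{lem:blow-up-sol-for-example-system} together with the explicit form of $\theta^*$.

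\emph{Necessity.} Suppose $\beta(\alpha-1)<1$. The explicit solution $(\theta^*,\rho^*)$ in \eqref{eq:example_solution} is a solution of \eqref{eq:ham-system-canonical-polar-coordinates} with initial value $z^*_0$. It blows up at $T_\mathrm{F}\in(0,T]$, because $\rho^*(t)\to+\infty$. By the computation made right after \eqref{eq:example_solution}, its rotation converges to the finite value
\begin{equation*}
\frac{1}{2\pi}\,\frac{\alpha\sigma_0^{\alpha-1}}{1-\beta(\alpha-1)}\,T_\mathrm{F}^{\alpha-\beta(\alpha-1)}.
\end{equation*}
Since $\rho^*>0$, this solution never meets the origin, so its rotation coincides with $(\theta^*(t)-\theta^*(0))/2\pi$. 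This contradicts \ref{hyp:A4}.

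\emph{Sufficiency.} Let $\beta(\alpha-1)\ge1$ and take any $z_0$ whose solution blows up in $(0,T]$. By Lemma~\ref{lem:blow-up-sol-for-example-system}, the blow-up time is exactly $T_\mathrm{F}$, and there is $\tau\in[0,T_\mathrm{F})$ with $\abs{\rho(t)-\rho^*(t)}<\frac12$ on $[\tau,T_\mathrm{F})$. The solution also never reaches the origin, because $z=0$ is an equilibrium; so the rotation is well defined and equals $\frac1{2\pi}\int_0^t\dot\theta\,\dd s$. It therefore suffices to show $\int_\tau^{T_\mathrm{F}}\dot\theta\,\dd s=+\infty$.

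On $[\tau,T_\mathrm{F})$ we have $\dot\theta=\partial_\rho H\ge\alpha\rho^{\alpha-1}-\epsilon L_g$ by \eqref{eq:example_bound_dHdrho}, and also $\rho\ge\rho^*(t)-\frac12$. Since $\rho^*(t)\to+\infty$, we can enlarge $\tau$ so that $\rho^*(t)-\frac12\ge\frac12\rho^*(t)$ and $\alpha\rho^{\alpha-1}\ge2\epsilon L_g$ on $[\tau,T_\mathrm{F})$. This gives
\begin{equation*}
\dot\theta\ge\frac{\alpha}{2}\rho^{\alpha-1}\ge\frac{\alpha}{2^{\alpha}}\rho^*(t)^{\alpha-1}=\frac{1}{2^{\alpha-1}}\,\dot\theta^*(t),
\end{equation*}
where the last equality uses $\dot\theta^*(t)=\alpha\rho^*(t)^{\alpha-1}$. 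The right-hand side is a positive multiple of $(T_\mathrm{F}-t)^{-\beta(\alpha-1)}$, whose integral over $[\tau,T_\mathrm{F})$ diverges precisely because $\beta(\alpha-1)\ge1$. Hence $\rot_t(z_0)\to+\infty$ as $t\to T_\mathrm{F}^-$, which is \ref{hyp:A4}.

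The only delicate step is the comparison $\rho^{\alpha-1}\gtrsim\rho^*(t)^{\alpha-1}$ along the blow-up solution. This is exactly what Lemma~\ref{lem:blow-up-sol-for-example-system} provides: it forces every blow-up solution to shadow $\rho^*$ within distance $\frac12$ near $T_\mathrm{F}$. The remainder is the elementary divergence criterion for $\int (T_\mathrm{F}-s)^{-p}\,\dd s$, which diverges if and only if $p\ge1$.
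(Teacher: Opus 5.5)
Your proof is correct and follows essentially the same route as the paper. The explicit solution $(\theta^*,\rho^*)$ violates \ref{hyp:A4} when $\beta(\alpha-1)<1$; otherwise Lemma~\ref{lem:blow-up-sol-for-example-system} together with \eqref{eq:example_bound_dHdrho} bounds $\dot\theta$ from below near $T_\mathrm{F}$ by a positive multiple of $\rho^*(t)^{\alpha-1}=(\sigma_0T_\mathrm{F})^{\alpha-1}(T_\mathrm{F}-t)^{-\beta(\alpha-1)}$, whose integral diverges. There is one harmless constant slip: $\frac{\alpha}{2^{\alpha}}\rho^*(t)^{\alpha-1}$ equals $2^{-\alpha}\dot\theta^*(t)$, not $2^{1-\alpha}\dot\theta^*(t)$.
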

\begin{proof}
	Let $(\theta(t),\rho(t))$ be a solution of the Hamiltonian system defined by \eqref{eq:example_hamiltonian} blowing up within $(0,T]$. Then, by Lemma \ref{lem:blow-up-sol-for-example-system}, we know that it blows up exactly at $T_\mathrm{F}$ and $\rho(t)-\rho^*(t)>-\frac{1}{2}$ for every $t\in[\tau,T_\mathrm{F})$.
	
	By \eqref{eq:example_estimate_dottheta} and since $\rho(t)\to+\infty$, there exists $\bar \tau\in[\tau,T_\mathrm{F})$ such that
	\begin{equation*}
		\dot{\theta}(t)\stackrel{\eqref{eq:example_bound_dHdrho}}{\geq} \alpha\rho(t)^{\alpha-1}-\epsilon L_g> \frac{\alpha}{2}\rho(t)^{\alpha-1}
		\geq \frac{\alpha}{2}\left(\rho^*(t)-\frac{1}{2}\right)^{\alpha-1}
		 \qquad\text{for every $t\in[\bar{\tau},T_\mathrm{F})$.}
	\end{equation*}
	Hence, naming $z_0$ the initial point of the solution, for any $t\in[\bar{\tau},T_\mathrm{F})$ it holds
		\begin{align*}
		\rot_{t}(z_{0})=\rot_{\bar \tau}(z_{0})+\frac{1}{2\pi}\int_{\bar \tau}^{t}\dot{\theta}(s)\dd s
		\geq \rot_{\bar \tau}(z_{0}) +\frac{1}{2\pi}\int_{\bar{\tau}}^{t}\frac{\alpha}{2}\left(\rho^*(s)-\frac{1}{2}\right)^{\alpha-1}\dd s \,.
	\end{align*}
Since $\rot_{\bar \tau}(z_{0})\in\R$, $\rot_{t}(z_{0})\to+\infty$ as $t\to T_\mathrm{F}$ if and only if $\int_{\bar \tau}^{t}\rho^*(s)^{\alpha-1}\dd s \to +\infty$. By construction, this happens if and only if $\beta(\alpha-1)\geq1$.

Hence, if $\beta(\alpha-1)\geq1$ then \ref{hyp:A4} holds. On the other hand, if $\beta(\alpha-1)<1$ then \ref{hyp:A4} is violated by the special solution $(\theta^*,\rho^*)$.
\end{proof}

\paragraph{Acknowledgements.}  A.C. and P.G. are members of the Gruppo Nazionale di Fisica Matematica of the Istituto Nazionale di Alta Matematica.

\end{document}